%% file: main.tex
\documentclass[12pt]{amsart}

\usepackage[paperheight=11in, paperwidth=8.5in, left=1in, top=1in, right=1in, bottom=1in]{geometry}
\usepackage[linktocpage=true, linktoc=all, colorlinks=true, linkcolor=black, citecolor=black, filecolor=black, urlcolor=black, pagebackref=false, pdfstartpage={1}, pdfstartview={FitH}, pdftitle={}, pdfauthor={}, pdfsubject={}, pdfcreator={}, pdfproducer={}, pdfkeywords={}]{hyperref}
\usepackage{graphicx} 
\usepackage{amsthm,amsfonts, amssymb, amscd}
\usepackage{cancel, comment, float}

\usepackage{enumerate}
\usepackage{youngtab}
\usepackage{ytableau}
\usepackage[all]{xy}
\usepackage{euscript}
\usepackage{cite}
\usepackage{tikz}
\usepackage{multicol}
\usepackage{subcaption}
\usepackage{stmaryrd}
\usetikzlibrary{matrix, backgrounds}

\newenvironment{bsmallmatrix}
  {\left[\begin{smallmatrix}}
  {\end{smallmatrix}\right]}

\newtheorem{theorem}{Theorem}[section]
\newtheorem{lemma}[theorem]{Lemma}
\newtheorem{corollary}[theorem]{Corollary}
\newtheorem{prop}[theorem]{Proposition}

\theoremstyle{remark}
\newtheorem{remark}[theorem]{Remark}
\theoremstyle{remark}
\newtheorem{example}[theorem]{Example}
\theoremstyle{definition}

\theoremstyle{definition}

\theoremstyle{definition}
\newtheorem{eg_no_qed}[theorem]{Example}
\newenvironment{ex}[1][]{\begin{eg_no_qed}[#1]\pushQED{\qed}}{\popQED\end{eg_no_qed}}
\AfterEndEnvironment{ex}{\noindent\ignorespaces}
\newtheorem{definition}[theorem]{Definition}

\newtheorem*{claim}{Claim}

\numberwithin{equation}{section}

\theoremstyle{plain}
\newtheorem{innercustomgeneric}{\customgenericname} 
\providecommand{\customgenericname}{}

\makeatletter 
\newcommand{\newcustomtheorem}[2]{%
  \newenvironment{#1}[1]
  {%
   \renewcommand\customgenericname{#2}%
   \def\theinnercustomgeneric{##1}%
   \@ifnextchar[{\innercustomgeneric}{\innercustomgeneric}%
  }
  {\endinnercustomgeneric}
}
\makeatother

\newcustomtheorem{customthm}{Theorem}
\newcustomtheorem{customconj}{Conjecture}
\newcustomtheorem{customprop}{Proposition}
\newcustomtheorem{customlem}{Lemma}

\newcommand{\C}{\mathbb{C}}           
\newcommand{\PP}{\mathbb{ P}} 

\renewcommand{\ker}{\operatorname{ker }}

\newcommand{\GL}{\operatorname{GL}}

\newcommand{\cH}{\mathcal{H}}

\tikzset{partition/.style={fill,circle,inner sep=1pt}}
\usetikzlibrary{positioning}
\usetikzlibrary{decorations.pathreplacing}
\usetikzlibrary{matrix,arrows}
\usetikzlibrary{calc}
\tikzset{partition/.style={fill,circle,inner sep=1pt},
         part/.style={baseline=0,scale=0.5,bend left=45},
         partlabel/.style={below}}
\usetikzlibrary{shapes,arrows}
\usetikzlibrary{snakes}
\tikzstyle{pnt}=[draw,ellipse,fill,inner sep=1pt]
\tikzstyle{opnt}=[draw,ellipse,inner sep=1pt]
\tikzstyle{opnt}=[ ]
\tikzstyle{pntt}=[draw,ellipse,fill,inner sep=0.5pt]
\tikzstyle{point}=[draw,ellipse,fill,inner sep=2pt]

\newcommand\cristina[1]{{\color{teal} \sf $\triangle$ Cristina: [#1]}}

\newcommand{\hleft}{\langle}
\newcommand{\hright}{\rangle}

\title{Recursive formula for the Equations of Hessenberg varieties}

\author{Hikari Iwasaki}
\address{Department of Mathematics\\ Stanford University\\ 450 Jane Stanford Way\\ Stanford, California\\ 94305\\ USA  }
\email{iwasakih@stanford.edu}

\author{Cristina Sabando-Alvarez}
\address{Department of Mathematics\\ Washington University in St. Louis \\ One Brookings Drive\\ St. Louis, Missouri\\ 63130\\ USA }
\email{sabandoalvarez@wustl.edu}

\begin{document}

\begin{abstract}
   Hessenberg varieties are subvarieties of the flag variety, defined by containment conditions on flags with respect to a linear operator. The study of these varieties lies in the intersection of algebraic geometry, combinatorics, and representation theory. In this paper, we develop an algebro-geometric procedure for determining the closed subvariety structure of a Hessenberg variety $\mathcal{H}(X,h)$ in the flag variety for any linear operator $X$ and Hessenberg function $h$, by imposing a partial order on the Hessenberg functions and analyzing the relation of the corresponding Hessenberg varieties. In particular, we give a concrete recursive formula for determining all equations cutting out a given Hessenberg variety in each Schubert cell. As an application, we provide an alternative geometric proof of Tymoczko's results on the existence of affine pavings of a given Hessenberg variety and on the dimension count of its cells.
\end{abstract}

\maketitle

\section{Introduction}
\label{sec: intro}

\input{intro}

\section{Preliminary Notion: The Flag Variety}
\label{sec: flag}
\input{flag_variety/flag_variety}

\section{Preliminary Notion: Hessenberg Varieties}
\label{sec: hessenberg}
\input{hessenberg/hessenberg_variety}

\section{Main Theorem: A Recursive Formula}
\label{sec: thm 1}

\input{Thm1/thm1}

\section{Prior result: Tymoczko's affine paving}

\input{Julianna_connection/Tymoczko_intro}

\section{Rederivation of Tymoczko's results from a geometric perspective using Theorem \ref{thm: Thm 1}}
\label{sec: rederivation}
As an application of Theorem \ref{thm: Thm 1}, we rederive Tymoczko's results on paving by affines of Hessenberg varieties, stated in Theorem \ref{thm: Cor 6.3} and Theorem \ref{thm: Tymoczko's dw}, from a geometric perspective. We first address the affineness condition of nonempty strata in Section~\ref{sec: affineness of strata}  and their dimension count in Section \ref{sec:dimstrata}. Finally, we address the nonempty condition of a stratum in Section \ref{sec: thm 2}.

\subsection{Affineness of the nonempty intersection $C_w \cap  \cH(X,h)$}
\input{Julianna_connection/highest_form}

\input{Thm2/thm2}


\bibliographystyle{alpha}
\bibliography{refs}  

\end{document}

%% file: intro.tex
Hessenberg varieties are central to geometric representation theory and combinatorics as their cohomology rings carry group actions that link the geometry of flag varieties with the theory of symmetric functions.
In this paper we study intersections of Hessenberg varieties with Schubert cells and develop a procedure to describe their structure as a closed subvariety of the flag variety. Using the poset structure on Hessenberg functions we obtain a recursive formula to  determine these intersections.

The \emph{flag variety} $\operatorname{Fl}(V)$ is the moduli space of flags $V_\bullet = (V_1\subseteq V_2\subseteq \cdots \subseteq V_n=V)$, which are nested collections of subspaces $V_i$ of dimension $i$ in an $n$-dimensional vector space $V$ over a  field $k$.  When $V = k^{n}$, the flag variety can alternatively be described as the quotient space $\operatorname{GL}_n/B$ of the general linear group by its Borel subgroup $B$ of upper-triangular matrices.


Hessenberg varieties, originally defined by De Mari, Procesi, and Shayman \cite{DeMariProcesiShayman1992} are subvarieties of the flag variety which are parametrized by a linear operator $X: k^n\rightarrow k^n$ and a non-decreasing function $h:\{1,2,\ldots,n\}\rightarrow\{1,2,\ldots,n\}$ satisfying $h(j)\geq j$, called a \emph{Hessenberg function}. More precisely, the \emph{Hessenberg variety} $\cH(X,h)$ is defined as the closed subvariety of the flag variety consisting of flags $V_\bullet$ such that $XV_i\subseteq V_{h(i)}$ for all $i$.
 



The flag variety admits a decomposition into $B$-orbits $C_w = BwB/B$, called \emph{Schubert cells}, which are parametrized by elements $w$ of the symmetric group $S_n$. Moreover, these Schubert cells are affine spaces, so they form an affine stratification (a paving by affines) of the flag variety, compatible with the Bruhat order on $S_n$.

As with the flag variety, combinatorial and geometric methods are central to studying the structure of Hessenberg varieties. However, while the full flag variety admits a straightforward paving by affine Schubert cells, intersecting a Hessenberg variety $\mathcal{H}(X,h)$ with these cells yields a much more intricate partition of the variety. Tymoczko~\cite{Tymoczko2006} showed that all Hessenberg varieties in type A admit a paving by affines, obtained by intersecting them with the Bruhat decomposition of the flag variety. This generalizes earlier results of De Concini--Procesi~\cite{deConciniProcesi} on related varieties. The result was later extended by Precup~\cite{Precup2013} to a broader class of Hessenberg varieties beyond Type A.

The existence of such an affine paving bridges the geometric properties and the underlying combinatorics of Hessenberg varieties, reducing the computation of topological invariants such as Betti numbers, Hodge numbers, and the topological Euler characteristic to a systematic counting of the dimensions of the affine cells.


\bigskip

The main result of this paper is an explicit algebro-geometric description of the closed subscheme structure of the Hessenberg variety $\cH(X,h)$, for any linear operator $X$ and any Hessenberg function $h$, when restricted to a Schubert cell.
We use the following perspective: rather than regarding the Hessenberg function $h$ to be a fixed input, we impose a partial order relation $\leq$ on the space of Hessenberg functions, and describe the closed subscheme relations of the corresponding Hessenberg varieties. More precisely, we can define partial ordering on Hessenberg functions so that two Hessenberg functions $h$ and $ h'$ satisfy $h' \leq h$ if and only if $h'(i) \leq h(i)$ for all $i$. Then  the associated Hessenberg varieties satisfy a closed embedding relation $\cH(X,h')\subset \cH(X,h)$ as closed subschemes of the flag variety. 
In the Main Theorem, we show that if $h$ and $h'$ differ only at one index by value 1, then the closed embedding relation $\cH(X,h')\subset \cH(X,h)$ is given by the vanishing locus of a single equation in Schubert variables of $C_w$.
\begin{customthm}{1}
[Main Theorem, informal statement]
\label{thm: intro thm 1}
    Let $X$ be any linear operator on $V$.
Suppose that $h $ and $ h'$ are two Hessenberg functions that differ only at index $k$ with value $h(k) = h'(k)+1$.
If we describe flags in $C_w$ in terms the Schubert canonical matrix form, then $C_w \cap \mathcal{H}(X, h')$ is cut out in $ C_w \cap \mathcal{H}(X, h)$ by a single equation in Schubert variables, which is determined by a recursive formula.
\end{customthm}
\noindent The precise statement of the Main Theorem, together with an explicit form of the recursive formula, is given in Theorem \ref{thm: Thm 1}.


Using the Main Theorem, we can determine the equations cutting out the Hessenberg variety $\cH(X,h)$ in each Schubert cell $C_w$ for any linear operator $X$ and  any Hessenberg function $h$ as follows.  Given a Hessenberg function $h$, we construct a Hessenberg path to $h$ from the \emph{top Hessenberg function} $h^{\operatorname{top}}$, defined as $h^{\operatorname{top}}(i) = n$ for all $i$, with respect to the partial order defined above.  Then given a fixed Schubert cell $C_w$, for each edge along the Hessenberg path, we apply the Main Theorem to obtain an equation cutting out the Hessenberg variety in $C_w$. The collection $S_{MT}$ of such equations along the Hessenberg path cuts out the Hessenberg variety in the Schubert cell $C_w$. Moreover, the collection $S_{MT}$ does not depend on the choice of the Hessenberg path from $h$ to $h^{\operatorname{top}}$.
 


\bigskip

As an application of this result, we present alternative rederivations of \cite[Corollary 6.3]{Tymoczko2006} by Tymoczko, deduced via an algebro-geometric perspective.
This result provides a condition for the intersection $C_w\cap \cH(X,h)$ to be nonempty, and proves each nonempty intersection is an affine space and computes and its dimension. To prove this result, Tymoczko introduced the notion of a matrix in \emph{highest form}. In our proof, we use a stronger notion called a matrix in \emph{strictly highest form},
which can be characterized as follows. A matrix is in strictly highest form if its columns are either zero or a standard basis vector, and it has the property that if $\phi(k)$ denotes the row-index of the pivot of the $k$-th column vector, then $\phi(k)$ must be non-decreasing in $k$ and strictly increasing once positive, with the convention that $\phi(k) = 0$ if the $k$-th column vector is zero.


Using our Main Theorem, along with the partial ordering $\leq$ of Hessenberg functions and $\phi$, we rederive results by Tymoczko from a geometric perspective as follows.
First, for affineness and the dimension counting of the nonempty intersection $C_w \cap \cH(X,h)$, we provide the following reduction.
\begin{customlem}{1}\label{intro:lem1}
  Let $X$ be a nilpotent matrix and assume that $C_w \cap \mathcal{H}(X,h)$ is nonempty. Let $S$ be a finite set of nonzero equations cutting $C_w \cap \mathcal{H}(X,h)$ in $C_w$. To show $C_w \cap \mathcal{H}(X,h)$ is an affine space, it suffices to show that 
\begin{enumerate}
    \item for each $f_\alpha \in S$, we can associate a Schubert variable $a^{(\alpha )}$ such that $f_{\alpha}$ can be decomposed as 
    $f_{\alpha} = -a^{(\alpha)} + g_{\alpha},$
    where $g_{\alpha}$ is independent of $a^{(\alpha)}$, and
    \item there exists a strict total order $\prec$ of equations in $S$ such that the variable $a^{(\alpha)}$ does not appear in any of the equations succeeding $f_{\alpha}$ in $S$.
\end{enumerate}
Moreover, if  (1) and (2) hold, then the codimension of $C_w \cap \cH(X,h)$ in $C_w$ is the cardinality  of $S$.  
\end{customlem}

In Section \ref{sec: rederivation}, we show that if $X$ is in strictly highest form and $S$ is the collection $S_{MT}$ of nonzero equations $f_{(k,m)}$ obtained by recursively applying the Main Theorem to each edge of a fixed Hessenberg path from $h$ to $h^{\operatorname{top}}$, then Statements (1) and (2) indeed hold; the proofs are given in Proposition \ref{prop: f(k,m) has a linear term} and Proposition \ref{prop: existence of total order}, respectively.  
\noindent The statement of Lemma \ref{intro:lem1} is also introduced as Lemma \ref{lem: reduction of affineness to total order} in Section 6.

\bigskip

Next, the condition for the intersection $C_w \cap \cH(X,h)$ to be nonempty can be written as a simple inequality on the Hessenberg function as follows.
\begin{customthm}{2}
\label{thm: Thm 2, intro}
Suppose that $X$ is a nilpotent matrix in strictly highest form and $w \in S_n$. There exists a unique Hessenberg function $h_{\operatorname{min}}$ such that $C_w \cap \mathcal{H}(X, h)$ is nonempty if  and only if $h \geq h_{\operatorname{min}}$, where $h_{\operatorname{min}}$ is given by
\begin{align*}
    h_{\min}(k) = \begin{cases}
         \max\{k, h_{\operatorname{min}}(k-1)\} &\text{if  } w(k) \leq \dim(\ker(X)), \\
        \max\{k, h_{\min}(k-1), w^{-1}(\phi(w(k)))\}  &\text{if  } w(k) > \dim(\ker(X)),
    \end{cases}
\end{align*}
\noindent where we use the convention $h_{\operatorname{min}}(0) = 0$.
\end{customthm}
\noindent Theorem \ref{thm: Thm 2, intro} is also stated as Theorem \ref{thm: Thm 2} in Section 6. As a corollary, this theorem presents a formula for the unique minimal Hessenberg function $h_{\text{min}}$ whose associated Hessenberg variety intersects a given Schubert cell.

\bigskip


This paper is structured as follows. In Section \ref{sec: flag}, we give definitions and key properties of the flag variety, as well as set up notation for the description of Schubert cells. In Section \ref{sec: hessenberg}, we introduce Hessenberg varieties with key examples. In Section \ref{sec: thm 1}, we provide the formal statement and proof of Theorem \ref{thm: intro thm 1}, which provides a recursive formula for determining the equations involved in the closed subscheme structure of a Hessenberg variety in each Schubert cell. In Section 5, we describe Tymoczko's results given in \cite[Corollary 6.3]{Tymoczko2006} as Theorems \ref{thm: Cor 6.3} and \ref{thm: Tymoczko's dw}. The precise definition of the highest form of a nilpotent operator and its properties are presented in this section. In Section 6, as application of the Main Theorem, we rederive Theorems \ref{thm: Cor 6.3} and \ref{thm: Tymoczko's dw} from algebro-geometric perspective.
Section 6.1 is dedicated to the proof of affineness of nonempty intersection and its dimension, including Lemma \ref{intro:lem1}.
In Section 6.2, we rederive the nonemptiness condition of the intersection $C_w \cap \cH(X,h)$ of the Hessenberg variety with a Schubert cell, proving Theorem \ref{thm: Thm 2, intro}.

\subsection*{Acknowledgments} 
This work was initiated at the Collaborative Workshop in Algebraic Geometry, held at the Institute for Advanced Study in 2024, and we thank this institute for its warm hospitality.
We thank Julianna Tymoczko for her topic suggestion and group guidance during the workshop. We are grateful to our advisors Martha Precup and Ravi Vakil for helpful discussions. CSA was partially supported by NSF CAREER grant DMS-2237057.

%% file: flag_variety/flag_variety.tex
In this paper, we assume that $n$ is a positive integer, $V$ is an $n$-dimensional vector space over an algebraically closed field $k$, and $e_1,\cdots, e_n$ form the standard basis of $V$. For background on the flag variety, we refer the reader to \cite{billey2025,Fulton1997} and state only the results relevant to the scope of this paper.


A \emph{flag} $V_\bullet$ in a vector space $V$ is a chain of subspaces,
$$0 = V_0 \subset V_1 \subset \cdots \subset V_n = V,$$
where each $V_i$ has dimension $i$. 
    The \emph{flag variety} $\operatorname{Fl}(V)$ is the moduli space of all flags on $V$. If $V \cong k^{n}$, we also denote by $\operatorname{Fl}_n$.
Flags can be described in matrix form, which leads to the notion of Schubert cells as follows. Given a flag $V_\bullet$, we assign to it an invertible $n\times n$-matrix whose first $i$ column vectors span the subspace $V_i$.   We use the following convention:
the matrix has exactly one \emph{pivot} entry equal to 1 in each row and column, with 0's below and to the right of the pivots. This construction uniquely determines the matrix associated with the given flag, which will be called the \emph{(Schubert) canonical matrix} associated to the flag $V_\bullet$. The \emph{Schubert cell associated with the permutation $w \in S_n$} is the set $C_w$ of
all flags with pivots at position $w$, in the sense that the associated canonical matrix sends $e_i$ to $e_{w(i)}$ for all $i$. By construction, the Schubert cells are affine spaces.
For the rest of the paper, we denote a permutation $w \in S_n$ by $[w(1)\cdots w(n)]$. For example, if $w$ is the transposition of $1$ and $2$, then we denote $w = [2134\cdots n]$.
\begin{ex}
The Schubert cells of $\operatorname{Fl}_3$ are given by 
    \begin{align*}
        C_{[321]} &= \Big\{\begin{bsmallmatrix}
        a_{11} & a_{12} & 1 \\
        a_{21} & 1 & 0 \\
        1 & 0 & 0
    \end{bsmallmatrix}  \ \Big| \ a_{11}, a_{12}, a_{21} \in \C \Big\}, \\
        C_{[231]} &= \Big\{\begin{bsmallmatrix}
        a_{11} & a_{12} & 1 \\
        1 & 0 & 0 \\
        0 & 1 & 0
    \end{bsmallmatrix}  \ \Big| \ a_{11}, a_{12} \in \C \Big\}, 
        &C_{[312]} &= \Big\{\begin{bsmallmatrix}
        a_{11} & 1 & 0 \\
        a_{21} & 0 & 1 \\
        1 & 0 & 0
    \end{bsmallmatrix}  \ \Big| \ a_{11},  a_{21} \in \C \Big\}, \\
        C_{[213]} &= \Big\{\begin{bsmallmatrix}
        a_{11} & 1 & 0 \\
        1 & 0 & 0 \\
        0 & 0 & 1
    \end{bsmallmatrix}  \ \Big| \ a_{11} \in \C \Big\},
        & C_{[132]} &= \Big\{\begin{bsmallmatrix}
        1 & 0 & 0 \\
        0 & a_{22} & 1 \\
        0 & 1 & 0
    \end{bsmallmatrix}  \ \Big| \ a_{22} \in \C \Big\}, \\
        C_{[123]} &= \Big\{\begin{bsmallmatrix}
        1 & 0 & 0 \\
        0 & 1 & 0 \\
        0 & 0 & 1
    \end{bsmallmatrix} \Big\}.
    \end{align*}
\end{ex}

\noindent Note that the subspace $V_i$ of the flag $V_\bullet$ spanned by the first $i$ column vectors of an invertible matrix $M$ is invariant under column operations which involve the first $i$ column vectors. 
It follows that for every invertible upper triangular matrix $b$, the matrix $Mb$ represents the same flag. Therefore, the flag variety has the following group-theoretic description: 
    \begin{equation}\label{flagident}
        \operatorname{Fl}_n \cong \GL_n/B. 
    \end{equation} 
Under the identification of flags with matrix cosets as above, the Schubert cell $C_w$ is the collection $BwB/B$ of cosets in $\operatorname{GL}_n/B$. 

Recall that a \emph{stratification} of a topological space $X$ is the data of a partition $X = \coprod_{i \in I} X_i$ into locally closed subsets $X_i$ and  a partial ordering on $I$, such that for each $j \in I$, $\overline{X}_j = \coprod_{i \leq j} X_i$. The locally closed subsets $X_i$ are called the \emph{strata} of the stratification.  When all of the strata are the form $\mathbb{A}^n$ for some $n$, the stratification is also a \emph{paving by affines}. The flag variety has a stratification by Schubert cells $C_w$ with respect to the Bruhat partial order on $w \in S_n$ as follows. The \emph{Bruhat order} of the symmetric group $S_n$ is a partial order $\leq$ such that $v \leq w$ if and only if for every $p, q \in \{1,\cdots, n\}$, the following inequality holds:
$$\#\{i \in \{1,\cdots, p\}  \ | \ v(i) \leq q \} \geq \#\{i \in \{1,\cdots, p\} \ | \ w(i) \leq q \}.$$

\begin{prop}[Stratification of the flag variety]
    \label{prop: stratification by Schubert cells}
    The flag variety has a stratification by the Schubert cells under Bruhat order, which is a paving by affines.
\end{prop}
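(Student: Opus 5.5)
The proof has four parts: the cells partition the flag variety, each cell is an affine space, the closure of a cell is a union of cells, and that union is indexed by the Bruhat order as defined above.

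\textbf{Partition.} Every flag has a unique Schubert canonical matrix. Given any invertible matrix representing $V_\bullet$, we clear entries by right multiplication by upper triangular matrices. Process the columns from left to right. In column $i$, choose as pivot the lowest nonzero entry not lying in a row already used by an earlier pivot, and scale it to $1$. Then use it to clear the entries to its right in that row, which is adding multiples of column $i$ to later columns. Each step is right multiplication by an element of $B$, so the flag is unchanged. Uniqueness holds because the pivot rows are intrinsic: $w(i)$ is the unique $q$ with $\dim(V_i\cap E_q)-\dim(V_{i-1}\cap E_q)$ jumping at $q$, where $E_q$ is the span of $e_1,\dots,e_q$ (equivalently, use $\dim(V_i\cap E_q)$ for all $i,q$). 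This gives $\operatorname{Fl}_n=\coprod_w C_w$.

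\textbf{Affineness and local closedness.} Each $C_w$ is an affine space $\mathbb{A}^{\ell(w)}$, with coordinates the free entries of the canonical matrix. These free entries sit at the positions $(w(j),i)$ with $j>i$ and $w(j)<w(i)$, so their number is the number of inversions of $w$. In the group picture, $C_w=BwB/B\cong U_w w$, where $U_w$ is a unipotent subgroup. The map from this affine space to $\operatorname{Fl}_n$ is an isomorphism onto its image. This image is locally closed, being a $B$-orbit of an algebraic group action.

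\textbf{Closure relation.} We must show $\overline{C_w}=\coprod_{v\le w}C_v$, with $\le$ the rank-condition Bruhat order above. First observe that for a flag in $C_w$,
\[
\dim(V_p\cap E_q)=\#\{i\le p : w(i)\le q\}.
\]
Define
\[
X_w=\{V_\bullet : \dim(V_p\cap E_q)\ge \#\{i\le p: w(i)\le q\}\ \text{for all } p,q\}.
\]
This is closed, since the conditions are rank conditions on the matrix $[\,M_{\le p}\mid e_1\cdots e_q\,]$ and hence are given by the vanishing of minors. By the observation above, $X_w$ is exactly $\coprod_{v\le w}C_v$. Since $C_w\subseteq X_w$ and $X_w$ is closed, we get $\overline{C_w}\subseteq \coprod_{v\le w}C_v$.

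\textbf{Main obstacle: the reverse inclusion.} We need $C_v\subseteq\overline{C_w}$ whenever $v\le w$. I would use the standard reduction: the rank-condition order coincides with the order generated by $v<vt$ for transpositions $t=(i\,j)$ with $v(i)<v(j)$. This is the classical combinatorial fact, which I would cite from \cite{billey2025,Fulton1997}. By transitivity of closures, and since the closure of a $B$-orbit is $B$-stable, it then suffices to show $C_v\subseteq\overline{C_{vt}}$. For that we need only the single point $vB$ in the closure, and we get it from an explicit one-parameter family. Take columns $i,j$ of the permutation matrix of $v$. For $s\neq 0$, replace column $i$ by $e_{v(i)}+s^{-1}e_{v(j)}$; equivalently, after rescaling, by $e_{v(j)}+s\,e_{v(i)}$. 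The resulting flag lies in $C_{vt}$, or can be moved into it by $B$, and it tends to $vB$ as $s\to 0$. Checking that this family genuinely sits in $C_{vt}$ requires tracking pivots, and that bookkeeping is the delicate point.

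The partition together with the closure relation is a stratification, and since every stratum is an affine space it is a paving by affines.
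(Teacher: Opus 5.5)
The paper does not prove this proposition. It is quoted as standard background from \cite{billey2025,Fulton1997}, and the only argument offered is the worked example exhibiting $C_{[312]}\subset\overline{C_{[321]}}$. Your outline is the standard proof, and most of it is sound:
\begin{itemize}
\item the column reduction to canonical form;
\item the count of free positions $(w(j),i)$ with $j>i$, $w(j)<w(i)$;
\item the formula $\dim(V_p\cap E_q)=\#\{i\le p: w(i)\le q\}$ on $C_w$;
\item the closed set $X_w=\coprod_{v\le w}C_v\supseteq\overline{C_w}$, which matches the paper's convention that the identity is minimal;
\item the reduction of the reverse inclusion to $vB\in\overline{C_{vt}}$ via the transposition characterization of the order (there you need $i<j$ as well as $v(i)<v(j)$).
\end{itemize}

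The step you flagged as delicate is wrong as written, because the limit is taken at the wrong end. Your rescaled column $e_{v(j)}+s\,e_{v(i)}$ shows that $s$ is exactly the Schubert coordinate $a_{v(i),i}$ of $C_{vt}$ (see below). So as $s\to 0$ your family converges to the permutation flag $vtB$. That point already lies in $C_{vt}$ and says nothing about $\overline{C_{vt}}$. The point $vB$ appears as $s\to\infty$.

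Here is the corrected degeneration. For $u\neq 0$, let $M_u$ be the permutation matrix of $v$ with column $i$ replaced by $e_{v(i)}+u\,e_{v(j)}$. Apply two column operations, i.e.\ right multiplication by elements of $B$: rescale column $i$ to $e_{v(j)}+u^{-1}e_{v(i)}$, then replace column $j$ by $-u(\mathrm{col}_j-\mathrm{col}_i)=e_{v(i)}$. The result is the canonical matrix of $C_{vt}$ whose only nonzero Schubert variable is $a_{v(i),i}=u^{-1}$. This is a legitimate free position, since $j>i$ and $vt(j)=v(i)<v(j)=vt(i)$. As $u\to 0$, $M_u$ tends to the invertible permutation matrix of $v$, so $vB\in\overline{C_{vt}}$.

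This is exactly the paper's example with $v=[312]$, $t=(2\,3)$, where $a_{12}=a_{v(2),2}$ is sent to infinity. The bookkeeping is just two column operations, but the direction of the limit is essential. Also drop the phrase ``or can be moved into it by $B$''. $C_{vt}$ is a $B$-orbit, so no flag outside it can be moved into it. What you need is column reduction of the representing matrix, which does not change the flag.

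A smaller point concerns uniqueness. Knowing that $w$ is determined by the flag gives disjointness of the cells. It does not give uniqueness of the canonical matrix, i.e.\ injectivity of $\mathbb{A}^{\ell(w)}\to C_w$. For that, add one line: if $M$ and $Mb$ with $b\in B$ are both canonical for $w$, the zeros to the right of the pivots force the off-diagonal entries of $b$ to vanish column by column, and the pivot $1$'s then force $b=I$.
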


\begin{ex}
To illustrate how the closure of a Schubert cell contains another Schubert cell,
    we return to the example of the flag variety $\operatorname{Fl}_3$ over $\C$. 
    Consider the Schubert cell $$C_{[321]} = \Big\{\begin{bsmallmatrix}
        a_{11} & a_{12} & 1 \\
        a_{21} & 1 & 0 \\
        1 & 0 & 0
    \end{bsmallmatrix}  \ \Big| \ a_{11}, a_{12}, a_{21} \in \C \Big \}. $$ 
    If $a_{12}$ is nonzero, the matrix corresponding to an element of this Schubert cell is
    \begin{align*}
        \begin{bmatrix}
        a_{11} & a_{12} & 1 \\
        a_{21} & 1 & 0 \\
        1 & 0 & 0
    \end{bmatrix}  \sim \begin{bmatrix}
        a_{11} & 1 & 1 \\
        a_{21} & 1/a_{12} & 0 \\
        1 & 0 & 0
    \end{bmatrix} \sim \begin{bmatrix}
        a_{11} & 1 & 0 \\
        a_{21} & 1/a_{12} & -1/a_{12} \\
        1 & 0 & 0
    \end{bmatrix}   \sim \begin{bmatrix}
        a_{11} & 1 & 0 \\
        a_{21} & 1/a_{12} & 1 \\
        1 & 0 & 0
    \end{bmatrix}  ,
    \end{align*}
    where the notation $\sim$ indicates that the matrices describe the same flag, or equivalently, they represent the same coset in the group-theoretic description of the flag variety, $\operatorname{Fl}_n \cong \GL_n/B$.    
    Now, as $1/a_{12}$ approaches $0$, the second column vector approaches the vector $\begin{bsmallmatrix}
        1 \\ 0 \\ 0
    \end{bsmallmatrix}$. Hence the limit of this matrix is an element of the following Schubert cell, 
    $$C_{[312]}= \Big\{\begin{bsmallmatrix}
        a_{11} & 1 & 0 \\
        a_{21} & 0 & 1 \\
        1 & 0 & 0
    \end{bsmallmatrix}  \ \Big| \ a_{11},  a_{21} \in \C \Big\},$$
    and conversely, any element of $C_{[312]}$ is realized as a limit of elements in $C_{[321]}$.
    This agrees with the fact that the Bruhat order on $S_n$ satisfies $[312] \leq [321]$ and Proposition \ref{prop: stratification by Schubert cells}.
\end{ex}
As an example, Figure \ref{fig: Bruhat order S3 and S4} shows the Bruhat order of the symmetric group $S_3$ and the closure relations of Schubert cells of the flag variety $\operatorname{Fl}_3$; the arrows in the figure on the right indicate the closure relations, where the target of the arrow indicates a largest Schubert cell in the boundary of the source Schubert cell.

\begin{figure}[ht!]
    \centering
    \begin{tikzpicture}[scale=0.75]
    \node (321) at (0,6) {$[321]$};
    \node (231) at (2,4) {$[231]$};
    \node (312) at (-2,4) {$[312]$};
    \node (213) at (2,2) {$[213]$};
    \node (132) at (-2,2) {$[132]$};
    \node (123) at (0,0) {$[123]$};
    \draw[-] (321) -- (231);
    \draw[-] (321) -- (312);
    \draw[-] (312) -- (132);
    \draw[-] (312) -- (213);
    \draw[-] (231) -- (132);
    \draw[-] (231) -- (213);
    \draw[-] (213) -- (123);
    \draw[-] (132) -- (123);
    \end{tikzpicture}
    \quad
    \begin{tikzpicture}[xscale=0.85, yscale=0.75]
    \node (321) at (0,6) {$\left\{ \begin{bsmallmatrix}
        a_{11} & a_{12} & 1 \\
        a_{21} & 1 & 0 \\
        1 & 0 & 0
    \end{bsmallmatrix} \right\}$};
    \node (231) at (2,4) { $\left\{ \begin{bsmallmatrix}
        a_{11} & a_{12} & 1 \\
        1 & 0 & 0 \\
        0 & 1 & 0
    \end{bsmallmatrix} \right\}$ };
    \node (312) at (-2,4) { $\left\{ \begin{bsmallmatrix}
        a_{11} & 1 & 0 \\
        a_{21} & 0 & 1 \\
        1 & 0 & 0
    \end{bsmallmatrix} \right\}$};
    \node (213) at (2,2) { $\left\{ \begin{bsmallmatrix}
        a_{11} & 1 & 0 \\
        1 & 0 & 0 \\
        0 & 0 & 1
    \end{bsmallmatrix} \right\}$};
    \node (132) at (-2,2) { $\left\{ \begin{bsmallmatrix}
        1 & 0 & 0 \\
        0 & a_{22} & 1 \\
        0 & 1 & 0
    \end{bsmallmatrix} \right\}$};
    \node (123) at (0,0) { $\left\{ \begin{bsmallmatrix}
        1 & 0 & 0 \\
        0 & 1 & 0 \\
        0 & 0 & 1
    \end{bsmallmatrix} \right\}$};
    
    \draw[->] (321) -- (231); 
    \draw[->] (321) -- (312); 
    \draw[->] (312) -- (213);   
    \draw[->] (312) -- (132); 
    \draw[->] (231) -- (213);  
    \draw[->] (231) -- (132);   
    \draw[->] (213) -- (123);  
    \draw[->] (132) -- (123);  
    \end{tikzpicture}

\caption{The Hasse diagrams for Bruhat order on $S_3$ (left) and the corresponding closure relations of the Schubert cells  of $\operatorname{Fl}_3$ (right)}
\label{fig: Bruhat order S3 and S4}
\end{figure}

%% file: hessenberg/hessenberg_variety.tex
A Hessenberg variety is a subvariety of the flag variety $\operatorname{Fl}(V)$ parametrized by two objects: a linear operator $X$ on $V$ and a Hessenberg function $h$.  For background on Hessenberg varieties, we refer the reader to \cite{AbeHoriguchi2019}.

\begin{definition}
    A \emph{Hessenberg function on $n$} is a function $h : \{1,\cdots, n\} \to \{1,\cdots, n\}$
    satisfying 
    \begin{enumerate}
        \item $h(i) \geq i$ for each $i\in\{1,\ldots,n\}$, and
        \item $h(i+1) \geq h(i)$ for all $i\in\{1,\ldots,n-1\}$.
    \end{enumerate}
\end{definition}
\noindent For the rest of this paper, we denote a Hessenberg function $h$ on $n$ in the form $\hleft h(1)\cdots h(n) \hright$. For example, the Hessenberg function given by the identity function for all $i$ will be denoted $\hleft 12\cdots n \hright$, and the Hessenberg function given by  $h(i) = n$ for all $i$, also called the \emph{top Hessenberg function}, will be denoted $h^{\operatorname{top}} =\hleft n\cdots n \hright$.
We can now define Hessenberg varieties.
\begin{definition}
Let $X$ be a linear operator on $V$ and $h$ a Hessenberg function on $n$. The associated \emph{Hessenberg variety} is 
\begin{align*}
    \mathcal{H}(X,h) := \{V_\bullet \in \operatorname{Fl}_n \ | \ XV_i \subset V_{h(i)} \text{ for all }i~\}.
\end{align*}
\end{definition}
\noindent For example, $\mathcal{H}(0,h)$ is the flag variety for every Hessenberg function $h$, and  $\mathcal{H}(X,h^{\operatorname{top}})$ is the flag variety for every linear operator $X$. 

Under the identification of $\operatorname{Fl}_n$ with $\operatorname{GL}_n / B$ and flag as $B$-cosets, we get the following alternative description of the Hessenberg variety:
$$
\mathcal{H}(X, h)=\left\{g B \in \operatorname{GL}_n/ B \mid g^{-1} X g \in H(h)\right\},
$$
where $H(h)$ is the \emph{Hessenberg space} associated to $h$, defined as $H(h) := \{M \in M_{n\times n} \ | \ M_{ij} = 0 \text{ if } i > h(j) \}$ where $M_{n\times n}$ denotes all $n\times n$ matrices over $k$.



 \begin{remark}
 \label{rmk: H(X,h) depend only on conjugacy class}
     Given a Hessenberg function, there is an isomorphism of algebraic varieties $\mathcal{H}(X, h) \simeq \mathcal{H}\left(g^{-1} X g, h\right)$ for all $g \in G$. In particular, Hessenberg varieties depend only on the conjugacy classes of the linear operator. When $X$ is a nilpotent operator, this implies that Hessenberg varieties depend only on the Jordan type of $X$.
 \end{remark}

A standard approach for studying Hessenberg varieties is to utilize the stratification of the flag variety by Schubert cells, as presented in Proposition~\ref{prop: stratification by Schubert cells}. By analyzing the intersection of the Hessenberg variety with each Schubert cell $C_w$, the problem reduces to examining local linear algebraic conditions on the affine coordinates $a_{ij}$ of the canonical matrix representing the cell. We refer to these coordinates as \emph{Schubert variables}.

\begin{ex}
Consider the case where the nilpotent matrix $X_\lambda$ is subregular, i.e., when $\lambda = (n-1, 1)$, and the Hessenberg function is the identity map $\hleft 1\cdots n \hright$. Then the Hessenberg variety $\mathcal{H}(X_{(n-1,1)}, \hleft 1\cdots n \hright)$ is a chain of $n-1$ copies of the projective line $\PP^1$.
For example, in the case $n = 3$, the flags $V_\bullet$ in the Hessenberg variety $\mathcal{H}(X_{(2,1)}, \hleft 123 \hright)$ are given by
    \begin{align*}
        \begin{bmatrix} 
        a & 1 & 0 \\
        0 & 0 & 1 \\
        1 & 0 & 0
        \end{bmatrix} \text{ where } a \in k,  \ 
        \begin{bmatrix} 
        1 & 0 & 0 \\
        0 & d & 1 \\
        0 & 1 & 0
        \end{bmatrix}, \text{ where }d \in k,  \ \text{ and }
        \begin{bmatrix} 
        1 & 0 & 0 \\
        0 & 1 & 0 \\
        0 & 0 & 1
        \end{bmatrix}.
    \end{align*}
    To see that this is a chain of two copies of $\PP^1$, we observe that
    as $a^{-1}$ approaches $0$, the Schubert cell meets $d=0$ of the second Schubert cell. As $d^{-1} $ approaches $0$, the Schubert cell meets the last Schubert cell (the identity). This is shown in Figure \ref{fig: subregular example}.

\begin{figure}[ht!]
    \centering
    \begin{tikzpicture}
      \fill[blue] (-4,0) circle (2pt) ;
      \fill[red] (0,0) circle (2pt) ;
      \fill[violet] (4,0) circle (2pt);

      \node[blue] at (-3.5,0.3) {$a=0$};
      \node[blue] at (-0.75,0.3) {$a=\infty$};
      \node[red] at (0.5,0.3) {$d = 0$};
      \node[red] at (3.5,0.3) {$d=\infty$};
      
      \node[blue] at (-2,-1.5) {$\left\{\begin{bsmallmatrix}
          a & 1 & 0 \\ 0 & 0 & 1 \\ 1 & 0 & 0
      \end{bsmallmatrix}\right\}$};
      \node[red] at (2,-1.5) {$\left\{\begin{bsmallmatrix}
          1 & 0 & 0 \\ 0 & d & 1 \\ 0 & 1 & 0
      \end{bsmallmatrix}\right\}$};
      \node[violet] at (4,-1.5) {$\left\{\begin{bsmallmatrix}
          1 & 0 & 0 \\ 0 & 1 & 0 \\ 0 & 0 & 1
      \end{bsmallmatrix}\right\}$};
      
      \draw[red] (0,0) .. controls (1,-1.) and (3,-1.) .. (4,0);
      \draw[blue] (-4,0) .. controls (-3,-1) and (-1,-1) .. (0,0);
    \end{tikzpicture}
    \caption{The Hessenberg variety $\mathcal{H}(X_{(2,1)}, [123])$ as a chain of two $\PP^1$.}
    \label{fig: subregular example}
\end{figure}
\end{ex}

We conclude this section with a description of a partial order of the Hessenberg functions, which will be a key tool for the main results of this paper presented in the next sections.

\begin{definition}
    \label{prop: poset structure on hessenberg functions}
 Let $h$ and $h'$ be Hessenberg functions on $n$. We say $h' \leq h$ if $h'(i) \leq h (i)$ for all $i$.
\end{definition} 

\noindent This poset structure on Hessenberg functions translates to closed embedding relations on the corresponding Hessenberg varieties as follows: given two Hessenberg functions $h'$ and $h$ with relation $h' \leq h$, we have an inclusion $\mathcal{H}(X,h') \subset \mathcal{H}(X,h)$ of Hessenberg varieties, which is a closed embedding. To understand this embedding, we define a partial order to track Hessenberg varieties whose Hessenberg functions differ by one at a certain index.
\begin{definition}\label{poseth}
Let $h$ and $h'$ be Hessenberg functions on $n$. We say that $h$ \emph{covers $h'$ at index $k$ with value $m$} if
    \[h(k)=h'(k)+1=m~\text{and}~ h_2(i)=h_1(i)~\text{for all}~i \neq k.\]
\end{definition}

\noindent For example, $\hleft 333 \hright$ covers $\hleft 233\hright$ at index $1$ with value $3$. 
As a non-example of a partial order relation, observe that $h_1=\hleft 1344 \hright $ and $h_2=\hleft 2244 \hright$ have no partial order relation between them. In this case,  we can identify that the (unique) smallest Hessenberg function larger than $h_1$ and $h_2$ is $\hleft 2344\hright$, and the (unique) largest Hessenberg function smaller than both $h_1$ and $h_2$ is $\hleft 1244\hright$;  see Figure \ref{fig: poset structure for n=3 and n=4}.

\begin{figure}[ht!]
    \centering
    \begin{tikzpicture}[scale=0.75]
    \node (333) at (0,6)  {$\hleft 333  \hright$};
    \node (233) at (0,4.5)  {$\hleft 233  \hright$};
    \node (223) at (1.5,3)  {$\hleft 223  \hright$};
    \node (133) at (-1.5,3)  {$\hleft 133  \hright$};
    \node (123) at (0,1.5)  {$\hleft 123  \hright$};
    
    \draw[->] (333) --(233);
    \draw[->] (233) --(223);
    \draw[->] (233) --(133);
    \draw[->] (223) --(123);
    \draw[->] (133) --(123);
    \end{tikzpicture}
    \qquad
    \begin{tikzpicture}[scale=0.75]
    \node (4444) at (0,9)  {$\hleft 4444  \hright$};
    \node (3444) at (0,7.5)  {$\hleft 3444  \hright$};
    \node (2444) at (-2,6)  {$\hleft 2444  \hright$};
    \node (3344) at (2,6)  {$\hleft 3344  \hright$};
    \node (1444) at (-2,4.5)  {$\hleft 1444  \hright$};
    \node (2344) at (0,4.5)  {$\hleft 2344  \hright$};
    \node (3334) at (2,4.5)  {$\hleft 3334  \hright$};
    \node (2244) at (0,3)  {$\hleft 2244  \hright$};
    \node (1344) at (-2,3)  {$\hleft 1344  \hright$};
    \node (2334) at (2,3)  {$\hleft 2334  \hright$};
    \node (1244) at (-2,1.5)  {$\hleft 1244  \hright$};
    \node(2234) at (2,1.5)  {$\hleft 2234  \hright$};
    \node(1334) at (0,1.5)  {$\hleft 1334  \hright$};
    \node(1234) at (0,0)  {$\hleft 1234  \hright$};
    \draw[->] (1334) -- (1234);
    \draw[->] (2234) -- (1234);
    \draw[->] (1244) -- (1234);
    \draw[<-] (1244) -- (2244);
    \draw[<-] (1244) -- (1344);
    \draw[->] (1344) -- (1334);
    \draw[<-] (1334) -- (2334);
    \draw[<-] (2234) -- (2334);
    \draw[<-] (2334) -- (3334);
    \draw[<-] (2234) -- (2244);
    \draw[<-] (1344) -- (1444);
    \draw[<-] (2244) -- (2344);
    \draw[<-] (1344) -- (2344);
    \draw[<-] (2334) -- (2344);
    \draw[<-] (1444) -- (2444);
    \draw[<-] (2344) -- (2444);
    \draw[<-] (2344) -- (3344);
    \draw[<-] (2444) -- (3444);
    \draw[<-] (3334) --(3344);
    \draw[<-] (3344)-- (3444);
    \draw[<-] (3444) --(4444);
    \end{tikzpicture}

    \caption{Hasse diagram of the partial order of Hessenberg functions for $n=3$ (left) and $n=4$ (right)}
    
    \label{fig: poset structure for n=3 and n=4}
\end{figure}


Finally, observe that for each Hessenberg function $h$, there exists a path from $h$ to $h^{\operatorname{top}}$ with respect to the partial order, in the sense that there are relations $h = h_0 \leq h_1 \leq \cdots \leq h_r = h^{\operatorname{top}}$, where $h_{i+1}$ covers $h_i$ for all $i$. Such a path will be called a \emph{Hessenberg path} from $h$ to $h^{\operatorname{top}}$. Note that it is possible to have multiple Hessenberg paths from a fixed $h$ to $h^{\operatorname{top}}$; see Figure \ref{fig: poset structure for n=3 and n=4}.

%% file: Thm1/thm1.tex

As the main result of this paper, we present a concrete recursive formula for identifying the closed subscheme structure of Hessenberg varieties in flag variety using the stratification by Schubert cells. 
The idea is as follows.
Let $\cH(X,h)$ be any Hessenberg variety, and fix a Hessenberg path from $h$ to $h^{\operatorname{top}}$:
$$h = h_0 \leq h_1 \leq h_2 \leq \cdots \leq h_r =h^{\operatorname{top}},$$ where $h_{i+1}$ covers $h_{i}$ for all $i$. The associated Hessenberg varieties also form a chain $$\cH(X,h)  = \cH(X, h_0)\subset \cdots \subset\cH(X,h_{i}) \subset \cH(X,h_{i+1}) \subset \cdots \subset \cH(X, h_r)  = \cH(X,h^{\operatorname{top}})=\operatorname{Fl}(V)$$ of closed embeddings.
In particular, to understand a Hessenberg variety $\cH(X,h)$ as a closed subscheme of the flag variety $\operatorname{Fl}(V)$, it suffices to understand the relation of two Hessenberg varieties $\cH(X,h_{i+1}) \subset \cH(X,h_{i})$, one covering the other, introduced in Section \ref{sec: hessenberg}. Hence in the following, we focus on the case where $h$ covers $h'$  at index $k$ with value $m$, so that we have an embedding of the corresponding Hessenberg varieties, $\mathcal{H}(X, h') \subset \mathcal{H}(X, h)$. 
The Main Theorem completely determines the equation(s) cutting out this embedding when restricted to a Schubert cell.

{
}


\subsection{Proof Strategy}


 
We are interested in the closed subscheme structure of $\cH(X,h')$ in $\cH(X,h)$. Assume that $h$ covers $h'$ at index $k$ with value $m$.

Let $V_\bullet$ be a flag in $\cH(X,h)$.   Denote by $v_i$ the $i$th column vector of $V_\bullet$ in the Schubert canonical matrix form. The following result provides an equivalent condition for this flag to also lie in $\cH(X,h')$. 
\begin{lemma}
    \label{lem: Xvk in Vm-1 condition}
    We have $V_\bullet \in \cH(X,h')$ if and only if $V_\bullet \in \cH(X,h)$ and $Xv_k \in V_{m-1}$.
\end{lemma}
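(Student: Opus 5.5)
We argue directly from the definition of $\cH(X,h)$ and the covering relation. Since $h$ covers $h'$ at index $k$ with value $m$, we have $h'(i)=h(i)$ for $i\neq k$ and $h'(k)=m-1$. The conditions defining $\cH(X,h')$ are $XV_i\subseteq V_{h'(i)}$ for all $i$. For $i\neq k$ these coincide with the conditions for $\cH(X,h)$, so the only new condition is $XV_k\subseteq V_{m-1}$. Thus we must show that, for a flag $V_\bullet\in\cH(X,h)$, the condition $XV_k\subseteq V_{m-1}$ is equivalent to $Xv_k\in V_{m-1}$.

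One direction is immediate: $v_k\in V_k$, so $XV_k\subseteq V_{m-1}$ gives $Xv_k\in V_{m-1}$.

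For the converse, $V_k$ is spanned by $v_1,\dots,v_k$, so it suffices to show $Xv_i\in V_{m-1}$ for each $i<k$. This follows from the structure of Hessenberg functions.
\begin{itemize}
\item Since $V_\bullet\in\cH(X,h)$, we have $Xv_i\in XV_i\subseteq V_{h(i)}$.
\item For $i<k$, we have $h(i)=h'(i)$, and since $h'$ is itself a Hessenberg function it is non-decreasing, so $h(i)=h'(i)\le h'(k)=m-1$.
\item Hence $Xv_i\in V_{h(i)}\subseteq V_{m-1}$.
\end{itemize}
Combining this with the hypothesis $Xv_k\in V_{m-1}$ gives $XV_k\subseteq V_{m-1}$, so $V_\bullet\in\cH(X,h')$.

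The only point needing care is the use of monotonicity of $h'$ rather than $h$. Monotonicity of $h$ alone would only give $h(i)\le m$, which is not enough. The existence of $h'$ as a Hessenberg function is exactly what forces $h(i)\le m-1$ for $i<k$.
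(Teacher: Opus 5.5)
Your proof is correct and follows essentially the same argument as the paper: you reduce to the single new condition at index $k$, then handle each $v_i$ with $i<k$ using $XV_i\subseteq V_{h(i)}=V_{h'(i)}$ and the monotonicity $h'(i)\le h'(k)=m-1$. Your remark that it is monotonicity of $h'$, not of $h$, that does the work is exactly the inequality the paper uses.
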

\begin{proof}
    By unwinding the definition of Hessenberg varieties, the claim is equivalent to showing $XV_i \subset V_{h'(i)}$ for all $i$ if and only if $XV_i \subset V_{h(i)}$ for all $i$ and $Xv_k \in V_{m-1}$. Because $h$ and $h'$ have the same value at $i \neq k$, this is equivalent to the following claim:
    \begin{claim}
        Assume $XV_i \subset V_{h'(i)} = V_{h(i)}$ for $i \neq k$. Then $XV_k \subset V_{m-1}$ if and only if $XV_k \subset V_m$ and $Xv_k \in V_{m-1}$.
     \end{claim}
     The forward implication is clear. For the backward implication, because the subspace $V_k$ is spanned by the first $k$ vectors of $V_\bullet$, it suffices to show that $Xv_i \in V_{m-1}$ for all $i \leq k$. The case $i = k$ is taken care of by the assumption $Xv_k \in V_{m-1}$. The case $ i < k$ is implied by the assumption $XV_i \subset V_{h
     (i)} = V_{h'(i)}$ and the inequality $h'(i)\leq h'(k)= m-1$.
\end{proof}

It follows that to describe the closed subscheme structure of $\cH(X,h')$ in $\cH(X,h)$,  we need to understand the condition $X v_k \in V_{m-1}$ in Lemma \ref{lem: Xvk in Vm-1 condition} algebraically.  To do this, denote the column vectors of the Schubert canonical matrix of the flag $V_\bullet$ by $v_1,\cdots, v_n$ as before. By expanding $Xv_k$ uniquely as $Xv_k = c_1v_1+\cdots+c_n v_n$, we observe that the condition $Xv_k \in V_m$ is equivalent to $c_{m+1}=\cdots = c_n = 0$, so $V_\bullet \in \cH(X,h)$ lies in $\cH(X,h')$ if and only if $c_{m} = 0$ in this expansion. 
To give an algebraic formulation of the condition $c_m = 0$, we work in the Schubert cell $C_w$ in which $V_\bullet$ lies, so that the vectors $v_i$ and consequently the coefficients $c_i$ can be expressed in terms of the corresponding Schubert variables $a_{i,j}$. This procedure results in the Main Theorem, which presents a constructive recursive formula for equations cutting out $C_w \cap \cH(X,h')$ in $C_w \cap \cH(X,h)$ in terms of the Schubert variables of the canonical Schubert matrix for $C_w$.

\begin{theorem}[Main Theorem, precise statement]
\label{thm: Thm 1} Let $X$ be any linear operator on $V$.
Suppose that $h $ and $ h'$ are two Hessenberg functions such that $h$ covers $h' $ at index $k$ with value $m$.
Let $V_\bullet$ be a flag in $C_w \cap \mathcal{H}(X, h)$ described 
in the canonical matrix form of $C_w$ in terms of Schubert variables $a_{i,j}$. Then
the closed embedding $C_w \cap \mathcal{H}(X, h') \subset C_w \cap \mathcal{H}(X, h)$ is cut out by a single equation, which is given by the following recursive formula:
\begin{align}
    \label{eq: thm 1 recursive equation}
    f_{(k,m)}  =  \Big[Xv_k - \sum_{w(j)> w(m)}  f_{(k,j)} v_{j}\Big]_{w(m)}. \tag{$\ast$}
\end{align}
\end{theorem}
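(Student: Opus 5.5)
By Lemma~\ref{lem: Xvk in Vm-1 condition}, a flag $V_\bullet\in C_w\cap\cH(X,h)$ lies in $\cH(X,h')$ if and only if $Xv_k\in V_{m-1}$. Expand $Xv_k=c_1v_1+\cdots+c_nv_n$. Since $V_\bullet\in\cH(X,h)$ gives $Xv_k\in V_m$, we have $c_j=0$ for $j>m$, so the condition becomes $c_m=0$. The plan is to show that, viewed as a polynomial function of the Schubert variables, $c_m$ is exactly $f_{(k,m)}$ as defined by \eqref{eq: thm 1 recursive equation}. More generally, I will show that for every index $j$ the coefficient $c_j$ of $Xv_k$ satisfies the same recursion with $m$ replaced by $j$. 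In other words, $f_{(k,j)}$ denotes $c_j$, and \eqref{eq: thm 1 recursive equation} is a formula for solving for these coefficients.

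The key input is the shape of the canonical matrix. Column $v_j$ has a $1$ in row $w(j)$. Its other nonzero entries lie in rows $r$ with $r<w(j)$ and $w^{-1}(r)>j$ (zeros below and to the right of pivots). In particular, if $[v_j]_{w(i)}\neq 0$ for some $i\neq j$, then $w(i)<w(j)$.

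Now take the $w(m)$-th coordinate of $Xv_k=\sum_j c_jv_j$. The term $j=m$ contributes $c_m$, because the pivot entry is $1$. A term $j\neq m$ contributes only if $[v_j]_{w(m)}\neq 0$, which forces $w(j)>w(m)$. Hence
\[
[Xv_k]_{w(m)} = c_m+\sum_{w(j)>w(m)} c_j\,[v_j]_{w(m)},
\]
which rearranges to $c_m=\bigl[Xv_k-\sum_{w(j)>w(m)}c_jv_j\bigr]_{w(m)}$. This identity holds for every $m$ and every flag, with no Hessenberg hypothesis needed. The recursion is well-founded by downward induction on the pivot row $w(m)$: the base case is $w(m)=n$, where the sum is empty. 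So $c_m=f_{(k,m)}$ is a polynomial in the Schubert variables.

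Finally, I restrict to $C_w\cap\cH(X,h)$. There $c_j=0$ for $j>m$, so only terms with $j<m$ and $w(j)>w(m)$ survive in the sum. This matches the statement, which is phrased with the recursion as written. The closed embedding $C_w\cap\cH(X,h')\subset C_w\cap\cH(X,h)$ is therefore the vanishing locus of the single equation $f_{(k,m)}$.

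The main subtlety is scheme-theoretic: I need the ideal, not just the set, to be generated by $f_{(k,m)}$. I would argue this by writing the conditions $Xv_i\in V_{h(i)}$ as the vanishing of the coefficients $c^{(i)}_j$ for $j>h(i)$, which are global functions on $C_w$. This expresses $\cH(X,h)\cap C_w$ and $\cH(X,h')\cap C_w$ as zero schemes of explicit coefficient families. The argument in the proof of Lemma~\ref{lem: Xvk in Vm-1 condition} then shows that the ideals differ by the single generator $c^{(k)}_m$: for $i<k$ the extra conditions are already among the generators, since $h'(i)\le m-1$.
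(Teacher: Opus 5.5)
Your proposal is correct and follows essentially the same route as the paper. You reduce via Lemma~\ref{lem: Xvk in Vm-1 condition} to the vanishing of the coefficient $c_m$ in the expansion of $Xv_k$, and obtain the recursion from the unitriangular shape of the canonical matrix (the paper phrases this as Gaussian elimination in order of decreasing pivot rows, whereas you read off the $w(m)$-th coordinate directly), while your check that the ideals of $C_w\cap\cH(X,h')$ and $C_w\cap\cH(X,h)$ differ by the single generator $c_m$ makes explicit a scheme-theoretic point the paper leaves implicit.
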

This is a formal statement of Theorem \ref{thm: intro thm 1} from the Introduction.
\begin{proof} 
    Denote the column vectors of the canonical matrix of the flag by  $v_1,\cdots, v_n$. By Lemma \ref{lem: Xvk in Vm-1 condition}, we wish to express the condition $Xv_k \in V_{m-1}$. By expanding $Xv_k$ uniquely with respect to the basis $\{v_1,\cdots, v_n\}$ as $Xv_k = c_1 v_1 + \cdots + c_n v_n$ and noting that $Xv_k \in V_m$ holds, the condition $Xv_k \in V_{m-1}$ is equivalent to $c_m = 0$. Therefore, the equation $f_{(k,m)}$ we wish to determine is the coefficient $c_m$ in this expansion. 

    We use Gaussian elimination to find the coefficients $c_i$ inductively as follows. Denote $v^{(q)} := v_{w^{-1}(q)}$ and $c^{(q)} := c_{w^{-1}(q)}$. The vector $v^{(q)}$ corresponds to the column vector of the Schubert cell whose pivot is at row index $q$. We expand $Xv_k$ in the basis of $V$ in the order of decreasing pivot positions: $\{v^{(n)},\cdots, v^{(1)}\}$. We determine the coefficients $c^{(i)}$ of the unique expansion
    $$Xv_k = c^{(n)}v^{(n)} + \cdots + c^{(1)}v^{(1)}$$
    using Gaussian elimination as follows.
    As the base case, we set $c^{(n)} = [Xv_k]_{n}$. Then $y_0 := Xv_k - c^{(n)} v^{(n)}$ has a lower pivot row-index than $n$.
    Next, we set $c^{(n-1)}: = [y_0]_{n-1}$. Then $y_1 := Xv_k  - c^{(n)} v^{(n)} - c^{(n-1)} v^{(n-1)}$ has a lower pivot row-index than $n-1$.
    For the general case, we set $c^{(n-(j+1))}: = [y_j]_{j+1}$ and $y_{j+1} = Xv_k - \sum_{l \geq n-j }  c^{(l)} v^{(l)}$. 
    This gives a recursion relation:
    $$c^{(n-j)}  = \Big[Xv_k - \sum_{l > n-j}  c^{(l)} v^{(l)}\Big]_{n-j}. $$
    By substituting $w^{-1}(n-j) = m$, we obtain a polynomial equation for $f_{(k,m)} = c_m$ in terms of Schubert variables of $C_w$, as claimed.
\end{proof}

\begin{corollary}
Let $h$ be a Hessenberg function. Take any path from the top Hessenberg function $h^{\operatorname{top}}$ to $h$ in the Hasse diagram of the partial order on Hessenberg functions. Then the Hessenberg variety $\cH(X,h)$ is cut out in $C_w$ by the collection of equations given by Theorem \ref{thm: Thm 1} at each edge of the path. $\qed$
\end{corollary}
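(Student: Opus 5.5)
The plan is to proceed by induction along the chosen path, using Theorem \ref{thm: Thm 1} at each covering step and the chain of closed embeddings described at the start of this section. Write the path from $h$ up to the top as $h = h_0 \leq h_1 \leq \cdots \leq h_r = h^{\operatorname{top}}$, where $h_{i+1}$ covers $h_i$ at some index $k_i$ with value $m_i$. Let $f_i := f_{(k_i,m_i)}$ be the polynomial in the Schubert variables of $C_w$ produced by the recursive formula \eqref{eq: thm 1 recursive equation} for this covering pair. I will prove by descending induction on $i$ that
$$C_w \cap \cH(X,h_i) = \{V_\bullet \in C_w \mid f_{i}=f_{i+1}=\cdots=f_{r-1}=0\}.$$

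The base case is $i=r$. Here $\cH(X,h^{\operatorname{top}}) = \operatorname{Fl}(V)$, as noted in Section \ref{sec: hessenberg}, so $C_w \cap \cH(X,h_r) = C_w$ and the set of equations is empty.

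For the inductive step, assume the statement holds for $h_{i+1}$. Theorem \ref{thm: Thm 1} says that $C_w \cap \cH(X,h_i)$ is the zero locus of $f_i$ inside $C_w \cap \cH(X,h_{i+1})$. Combining this with the inductive hypothesis gives the claim for $h_i$.

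The main point needing care is what "cut out" means for $f_i$. As written in the proof of Theorem \ref{thm: Thm 1}, $f_i$ is the coefficient $c_{m_i}$ in the expansion of $Xv_{k_i}$ in the basis $v_1,\dots,v_n$. This coefficient is a polynomial defined on all of $C_w$, since the canonical matrix is unipotent up to permutation, so Gaussian elimination involves no division. Its vanishing detects $Xv_{k_i}\in V_{m_i-1}$ only on the locus where $Xv_{k_i}\in V_{m_i}$, that is, on $\cH(X,h_{i+1})$, which is where Lemma \ref{lem: Xvk in Vm-1 condition} applies. Because of this, I will phrase the induction as an intersection of zero loci inside the previous stratum, never as a standalone statement about $f_i$ on all of $C_w$. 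With that phrasing the argument is just the transitivity of "cut out by" along the chain of closed embeddings. If one wants the statement scheme-theoretically rather than on points, the same induction works, because each step is an equality of ideals modulo the previous ideal. This is the step I expect to be the only real obstacle, and it is a bookkeeping issue rather than a computational one.
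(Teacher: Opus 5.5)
Your proposal is correct and follows the paper's route. The paper marks this corollary as immediate ($\qed$) from Theorem~\ref{thm: Thm 1} applied along the chain of closed embeddings $\cH(X,h_i)\subset\cH(X,h_{i+1})$ set up at the start of Section~\ref{sec: thm 1}, and your descending induction spells out exactly that argument. Your extra bookkeeping is also sound: $f_{(k_i,m_i)}$ is a polynomial on all of $C_w$, since the canonical matrix is a permutation times an upper unitriangular matrix, but it is only used to cut out $C_w\cap\cH(X,h_i)$ inside $C_w\cap\cH(X,h_{i+1})$, which is how the paper uses the formula.
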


\begin{remark}
    In Theorem \ref{thm: Thm 1}, $X$ is an arbitrary linear operator that can be taken to be any representative of the same conjugacy class if necessary. In Section 6, we utilize this property and take $X$ to be a particular representative of a conjugacy class of a nilpotent matrix.
\end{remark} 

\begin{ex}
  Consider the case where $n = 3$ and the nilpotent operator $X_\lambda$ is the Jordan canonical form of Jordan type $(2,1)$, given by $X_{(2,1)} = \begin{bsmallmatrix}
    0 & 1 & 0 \\ 0 & 0 & 0  \\ 0 & 0 & 0 
\end{bsmallmatrix}$.
Using Theorem \ref{thm: Thm 1}, we determine the equations cutting out $C_w \cap \mathcal{H}(X_\lambda, h)$ in $C_w$ for all $w \in S_n$ for a fixed Hessenberg function $h$ as follows.

\input{Thm1/example_2_1_shorter}

\end{ex}
 \input{Thm1/example_2_1_summary}

\input{Thm1/example_2_1_table}

Using Table \ref{table: (2,1)}, we can deduce some geometric properties of each Hessenberg variety associated with Jordan type $(2,1)$. For example, we get the following observation of  the Hessenberg variety $\mathcal{H}(X_{(2,1)}, \hleft 233  \hright)$.
\begin{prop}
    The Hessenberg variety $\mathcal{H}(X_{(2,1)}, \hleft 233  \hright)$ has two irreducible components, each of dimension 2, whose  intersection is the chain of two copies of $\PP^1$.

\end{prop}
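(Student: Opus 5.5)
Rather than read the answer off Table~\ref{table: (2,1)} cell by cell, I would argue directly from the definition and use the table only as a consistency check. Take $X=X_{(2,1)}$, so $Xe_2=e_1$ and $Xe_1=Xe_3=0$; in particular $\ker X=\operatorname{span}(e_1,e_3)$ and $\operatorname{im}X=\operatorname{span}(e_1)$. For $h=\hleft 233\hright$ the conditions $XV_2\subset V_3$ and $XV_3\subset V_3$ are automatic. Hence $\mathcal{H}(X,h)$ is exactly the set of flags with $XV_1\subset V_2$.

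The first step is to split this condition. Write $V_1=\operatorname{span}(v)$. Then $Xv=v_2e_1$, where $v_2$ is the second coordinate of $v$. If $v_2=0$, then $V_1\subset \ker X$ and the condition holds. If $v_2\neq 0$, the condition says exactly that $e_1\in V_2$. This gives the decomposition $\mathcal{H}(X,h)=A\cup B$ with
\[ A=\{V_\bullet \mid V_1\subset \ker X\},\qquad B=\{V_\bullet \mid e_1\in V_2\}. \]
Both $A$ and $B$ are closed, and both are contained in $\mathcal{H}(X,h)$: on $B$ we have $XV_1\subset \operatorname{span}(e_1)\subset V_2$.

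Next I would show that $A$ and $B$ are irreducible of dimension $2$. For $A$, the map $V_\bullet\mapsto V_1$ exhibits $A$ as a bundle over $\PP(\ker X)\cong\PP^1$ whose fibre over $V_1$ is the $\PP^1$ of planes $V_2\supset V_1$. For $B$, the map $V_\bullet\mapsto V_2$ exhibits $B$ as a bundle over the $\PP^1$ of planes containing $e_1$, with fibre the $\PP^1$ of lines $V_1\subset V_2$. Each is a $\PP^1$-bundle over $\PP^1$, hence irreducible of dimension $2$. Neither contains the other: a flag with $V_1=\operatorname{span}(e_3)$ and $V_2=\operatorname{span}(e_2,e_3)$ lies in $A\setminus B$, and a flag with $V_1=\operatorname{span}(e_2)$ and $V_2=\operatorname{span}(e_1,e_2)$ lies in $B\setminus A$. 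So $A$ and $B$ are precisely the two irreducible components.

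Finally I would compute $A\cap B=\{V_1\subset\ker X,\ e_1\in V_2\}$ by cases.
\begin{itemize}
\item If $V_1=\operatorname{span}(e_1)$, then $V_2$ ranges over all planes containing $e_1$. This is a copy of $\PP^1$.
\item If $V_1\neq\operatorname{span}(e_1)$, then $V_2=V_1+\operatorname{span}(e_1)=\ker X$, with $V_1$ ranging over $\PP(\ker X)$. The closure of this locus is a second copy of $\PP^1$.
\end{itemize}
The two lines meet only in the single flag $(\operatorname{span}(e_1),\ker X)$, so $A\cap B$ is a chain of two $\PP^1$'s. This also matches the earlier example: by Lemma~\ref{lem: Xvk in Vm-1 condition} and the computation there, $A\cap B$ coincides with $\mathcal{H}(X,\hleft 123\hright)$.

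The only step needing care is the set-theoretic decomposition $\mathcal{H}(X,h)=A\cup B$, which rests on the dichotomy $v_2=0$ versus $v_2\neq0$; the rest is routine bundle arguments. As a check, I would compare the cells of $A$ and $B$ with the rows of Table~\ref{table: (2,1)}: on each $C_w$, the equation $f_{(1,2)}$ from Theorem~\ref{thm: Thm 1} should factor as the product of the local equations of $A$ and of $B$.
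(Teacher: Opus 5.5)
Your proof is correct, and it takes a genuinely different route from the paper's. The paper reads the cell decomposition off Table~\ref{table: (2,1)}. There, $\mathcal{H}(X_{(2,1)},\hleft 233\hright)$ consists of $\{a_{21}=0\}\subset C_{[321]}$ and all of $C_{[312]}$ (both $2$-dimensional), together with $C_{[213]}$, $C_{[132]}$ and $C_{[123]}$. The paper then declares the closures of the two $2$-dimensional pieces to be the components, meeting along the closures of the $1$-cells $\{a_{21}=0\}\subset C_{[312]}$ and $C_{[132]}$.

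Your objects match these exactly: $A=\overline{C_{[321]}\cap\{a_{21}=0\}}$ and $B=\overline{C_{[312]}}$. Your two lines are the closures of those two $1$-cells, and they meet at $(\operatorname{span}(e_1),\ker X)$, the permutation flag of $[132]$.

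What your route buys is a complete justification. The paper leaves implicit that the lower cells lie in the closures of the two top cells, and what those closures actually are. Your description of $A$ and $B$ as the Schubert-type divisors $\{V_1\subset\ker X\}$ and $\{\operatorname{im}X\subset V_2\}$, each a $\PP^1$-bundle over $\PP^1$, settles irreducibility, dimension and the intersection intrinsically. Your identification $A\cap B=\mathcal{H}(X_{(2,1)},\hleft 123\hright)$ also ties the result back to the earlier subregular example. What the paper's route buys is illustration: the proposition is there to show that geometry can be read off from the equations produced by Theorem~\ref{thm: Thm 1}. That cell-by-cell method is systematic, whereas your dichotomy relies on $X$ having rank one.

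There is one small slip in your optional consistency check. The edge $\hleft 333\hright\to\hleft 233\hright$ is a cover at index $k=1$ with value $m=3$. In the paper's notation the relevant equation is therefore $f_{(1,3)}$ (the coefficient of $v_3$ in $Xv_1$), not $f_{(1,2)}$. Also, the factorization into local equations of $A$ and $B$ holds only up to units. For example, $C_{[321]}$ misses $B$ entirely, and there $f_{(1,3)}=a_{21}$ is just the local equation of $A$.
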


\begin{proof}
    
From Table \ref{table: (2,1)}, we observe that $\mathcal{H}(X_{(2,1)}, \hleft 233  \hright)$ has two irreducible components, each of dimension 2 from $w = [321]$ and $[312]$:
$$\overline{\Bigg\{\begin{bmatrix}
    a_{11} & a_{12} & 1 \\ 
    0 & 1 & 0 \\ 
    1 & 0 & 0
\end{bmatrix}\ \Big| \ a_{11}, a_{12} \in k \Bigg\}} \text{ and } \overline{\Bigg\{\begin{bmatrix}
    a_{11} & 1 & 0 \\ 
    a_{21} & 0 & 1 \\ 
    1 & 0 & 0
\end{bmatrix}\ \Big| \ a_{11}, a_{21} \in k \Bigg\}}, $$
which intersect exactly at two one-dimensional cells 
$$\overline{\Bigg\{\begin{bmatrix}
    a_{11} & 1 & 0 \\ 
    0 & 0 & 1 \\ 
    1 & 0 & 0
\end{bmatrix}\ \Big| \ a_{11}\in k \Bigg\}} \text{ and } \overline{\Bigg\{\begin{bmatrix}
    1 & 0 & 0 \\ 
    0 & a_{22} & 1 \\ 
    0 & 1 & 0
\end{bmatrix}\ \Big| \ a_{22}\in k \Bigg\}},$$
which intersect exactly at the zero-dimensional cell 
$${\Bigg\{\begin{bmatrix}
    1 & 0 & 0 \\ 
    0 & 0 & 1 \\ 
    0 & 1 & 0
\end{bmatrix}\Bigg\}}.$$
\end{proof}

\noindent 





\begin{ex}
Let $n=4$ and $w=[4213]$ which has Schubert cell $$C_{[4213]} = \Bigg\{\begin{bsmallmatrix}
             a_{11} & a_{12} & 1 & 0 \\
             a_{21} & 1 & 0 & 0  \\
             a_{31} & 0 & 0 & 1 \\
             1 & 0 & 0 & 0
         \end{bsmallmatrix}  \ \Big| \ a_{ij} \in k  \Bigg\}, $$
    and the linear operator $X =  \begin{bsmallmatrix}
        0 & 0 & 1 & 0 \\
        0 & 0 & 0 & 1 \\
        0 & 0 & 0 & 0 \\
        0 & 0 & 0 & 0 
    \end{bsmallmatrix}$. Note that $X$ is nilpotent of Jordan type (2,2). From the following matrix multiplication,
    \begin{align*}
   \begin{bmatrix}
        0 & 0 & 1 & 0 \\
        0 & 0 & 0 & 1 \\
        0 & 0 & 0 & 0 \\
        0 & 0 & 0 & 0 
    \end{bmatrix}
    \begin{bmatrix}
             a_{11} & a_{12} & 1 & 0 \\
             a_{21} & 1 & 0 & 0  \\
             a_{31} & 0 & 0 & 1 \\
             1 & 0 & 0 & 0
     \end{bmatrix} = 
    \begin{bmatrix}
         a_{31} & 0 & 0 & 1  \\
        1 & 0 & 0 & 0 \\
         0 & 0 & 0 & 0 \\
        0 & 0 & 0 & 0 
     \end{bmatrix} ,
    \end{align*}
    we observe that the column vectors expand in terms of the basis $\{v^{(4)},\cdots, v^{(1)}\} = \{v_1,\cdots, v_4\} $ as follows.
    \begin{equation*}
        \begin{cases}
        X v_1 = v_2  - (a_{12}-a_{31})v_3 \\
        X v_2 = 0, \\
        X v_3 = 0, \\
        X v_4 = v_3 .
    \end{cases} 
    \end{equation*}
    
    This leads to the following equations and diagram for $w = [4213]$ in Figure \ref{fig: (2,2), w=[4213]}.    
    \begin{align*}       
    \begin{cases}
        f_{1,2} = 1,  \quad  f_{1,3} = a_{31}-a_{12} ,\quad f_{1,4} =0, \\
        f_{2,3} = 0,  \quad  f_{2,4} = 0, \\
        f_{3,4} = 0.
    \end{cases}
    \end{align*}
     
\end{ex}

\begin{figure}[H]
\centering
\begin{tikzpicture}
\node (4444) at (0,9) {$\hleft 4444 \hright$};
\node (3444) at (0,7.5) {$\hleft 3444 \hright$};
\node (2444) at (-2,6) {$\hleft 2444 \hright$};
\node (3344) at (2,6) {$\hleft 3344 \hright$};
\node (1444) at (-2,4.5) {$\hleft 1444 \hright$};
\node (2344) at (0,4.5) {$\hleft 2344 \hright$};
\node (3334) at (2,4.5) {$\hleft 3334 \hright$};
\node (2244) at (0,3) {$\hleft 2244 \hright$};
\node (1344) at (-2,3) {$\hleft 1344 \hright$};
\node (2334) at (2,3) {$\hleft 2334 \hright$};
\node (1244) at (-2,1.5) {$\hleft 1244 \hright$};
\node(2234) at (2,1.5) {$\hleft 2234 \hright$};
\node(1334) at (0,1.5) {$\hleft 1334 \hright$};
\node(1234) at (0,0) {$\hleft 1234 \hright$};

\draw[->] (1334)  [green,double] --node[left, green]  { $0$} (1234);
\draw[->] (2234) [red,dashed] -- node[below right,red]{$1$} node[red]{$\times$} (1234);
\draw[->] (1244) [green,double] -- node[below left,green]{$0$}(1234);
\draw[<-] (1244)[red,dashed] -- node[above,red]{$1$} node[red]{$\times$} (2244);
\draw[<-] (1244) [green,double] --node[left, green]  { $0$} (1344);
\draw[->] (1344)[green,double] -- node[below,green]{$0$}  (1334);
\draw[<-] (1334)[red,dashed] -- node[ above,red]{$1$} node[red]{$\times$} (2334);
\draw[<-] (2234)[green,double] --node[right, green]  { $0$} (2334);
\draw[<-] (2334)[blue] --node[right,blue]  { $a_{31}-a_{12}$}  (3334);
\draw[<-] (2234) [green,double]-- node[below,green]{$0$} (2244);
\draw[<-] (1344) [green, double]--node[ left,green]  { $0$}   (1444);
\draw[<-] (2244)[green,double] --node[left, green]  { $0$} (2344);
\draw[<-] (1344)[red,dashed] -- node[ above,red]{$1$} node[red]{$\times$} (2344);
\draw[<-] (2334)[green,double] -- node[below,green]{$0$}(2344);
\draw[<-] (1444) [red,dashed]--  node[ left,red]{$1$} node[red]{$\times$} (2444);
\draw[<-] (2344) [green, double]--node[below left,green]  { $0$}  (2444);
\draw[<-] (2344)[blue] --node[above left, blue]  { $a_{31}-a_{12}$} (3344);
\draw[<-] (2444) [blue]-- node[above left,blue]  { $a_{31}-a_{12}$} (3444);
\draw[<-] (3334)[green, double] -- node[ right,green]{$0$} (3344);
\draw[<-] (3344)[green, double]-- node[above right,green]{$0$}(3444);
\draw[<-] (3444) [green, double]--node[left, green] {$0$}(4444);
\end{tikzpicture}
\caption{A partially ordered diagram of Hessenberg varieties with cutting equations for the operator $X=\begin{bsmallmatrix}
        0 & 0 & 1 & 0 \\
        0 & 0 & 0 & 1 \\
        0 & 0 & 0 & 0 \\
        0 & 0 & 0 & 0 
    \end{bsmallmatrix}$ and  $w = [4213]$.}
\label{fig: (2,2), w=[4213]}
\end{figure}

As seen in these examples, geometric properties of Hessenberg varieties, such as dimensions and irreducible components, can be retrieved directly by looking at the equations cutting out $C_w \cap \cH(X,h)$ in the Schubert cells $C_w$ and varying $w \in S_n$.

%% file: Thm1/example_2_1_shorter.tex
To start, take $w = [321]$. The canonical matrix form $M$ of the Schubert cell is given by 
     $$M = \begin{bmatrix}
        a_{11} & a_{12} & 1 \\
        a_{21} & 1 & 0 \\
        1 & 0 & 0
    \end{bmatrix} = \begin{bmatrix}
        v_{1} & v_2 & v_3
    \end{bmatrix}.$$
    We compute and expand $X_\lambda v_k$ in terms of the basis $\{v_i\}$:
    \begin{align*}
        X_\lambda M = \begin{bmatrix}
        a_{21} & 1 & 0 \\
        0 & 0 & 0 \\
        0 & 0 & 0
    \end{bmatrix} \implies \begin{cases}
        X_\lambda v_1 = a_{21} v_3, \\
        X_\lambda v_2 = v_3, \\
        X_\lambda v_3 = 0. 
    \end{cases}
    \end{align*}
    Therefore, 
    $f_{1,2} =0, \  f_{1,3} = a_{21}, \text{ and } f_{2,3} = 1 . $
    This yields the following partially ordered diagram of Hessenberg varieties with corresponding equations cutting the succeeding Hessenberg variety given in Figure 
    \ref{fig:lambda = (2,1), w = [321]}. The green thickened arrows indicate that the additional equation $f_{(k,m)}$ is $0$, or equivalently, no additional equation cutting out the closed subscheme, and the red dashed arrows indicate the additional equation  $f_{(k,m)}$ is 1, i.e., the closed subscheme cut by the equation $f_{(k,m)}$ is empty.
    
    \begin{figure}[H]
    \centering
    \begin{tikzpicture}
    \node (333) at (0,6) {$\hleft 333  \hright$};
    \node (233) at (0,4.5) {$\hleft 233  \hright$};
    \node (223) at (1.5,3) {$\hleft 223  \hright$};
    \node (133) at (-1.5,3) {$\hleft 133  \hright$};
    \node (123) at (0,1.5) {$\hleft 123  \hright$};
    
    \draw[->] (333) [blue]--node[left,blue]  { $a_{21} $} (233);
    \draw[->] (233) [red,dashed]--node[ red]{$\times$}node[right, red]{$\quad 1$}  (223);
    \draw[->] (233) [green,double]--node[green, left] {$0 \quad$}  (133);
    \draw[->] (223) [green,double]-- node[green, right] {$\quad  0 \quad$}  (123);
    \draw[->] (133) [red,dashed]--node[red]{$\times$} node[left, red]{$1  \quad$}  (123);
    \end{tikzpicture}
    \caption{Equations cutting the Hessenberg varieties at Schubert cell $C_{[321]}$ associated with the Jordan canonical form of Jordan type $(2,1)$}
    \label{fig:lambda = (2,1), w = [321]}
    \end{figure}

    \noindent To obtain the final equation cutting out $C_w \cap \mathcal{H}(X_\lambda , h)$ in $C_w$ for any $w$, we take any path from the top Hessenberg function $h^{\columnseprulecolor{top}}$ to $h$, and collect equations $f_{(k,m)}$ on the path.
    For example, from this diagram, we can read that $C_{[321] } \cap \mathcal{H}(X_{(2,1)}, \hleft 233  \hright)$ and  $C_{[321] } \cap \mathcal{H}(X_{(2,1)}, \hleft 133  \hright)$ are cut out by the equation $a_{21} = 0$, while $C_{[321] } \cap \mathcal{H}(X_{(2,1)}, \hleft 223  \hright)$ and $C_{[321] } \cap\mathcal{H}(X_{(2,1)}, \hleft 123  \hright)$ are empty.

 By applying the same procedure to all the other Schubert cells, we obtain a collection of diagrams of equations cutting out each Schubert cell for each Hessenberg function, presented in Figure \ref{fig: diagrams for lambda=(2,1)}. These diagrams can be summarized to give Table \ref{table: (2,1)}.

%% file: Thm1/example_2_1_summary.tex
\begin{figure}[H]
\begin{subfigure}{0.3\textwidth}
  \centering
    \begin{tikzpicture}
    \node (333) at (0,6) {$\hleft 333  \hright$};
    \node (233) at (0,4.5) {$\hleft 233  \hright$};
    \node (223) at (1.5,3) {$\hleft 223  \hright$};
    \node (133) at (-1.5,3) {$\hleft 133  \hright$};
    \node (123) at (0,1.5) {$\hleft 123  \hright$};
    
    \draw[->] (333) [blue]--node[left,blue]  { $a_{21} = 0$} (233);
    \draw[->] (233) [red, dashed]--node[red]{$\times$} node[right,red]{$1$}  (223);
    \draw[->] (233) [green, double]-- node[left,green]{$0$} (133);
    \draw[->] (223) [green, double]-- node[right,green]{$0$} (123);
    \draw[->] (133) [red, dashed]--node[red]{$\times$} node[left,red]{$1$} (123);
    \end{tikzpicture}
    
    \caption{$w = [321]$}
\end{subfigure}
\hfill
\begin{subfigure}{0.3\textwidth}
  \centering
    \begin{tikzpicture}
    \node (333) at (0,6) {$\hleft 333  \hright$};
    \node (233) at (0,4.5) {$\hleft 233  \hright$};
    \node (223) at (1.5,3) {$\hleft 223  \hright$};
    \node (133) at (-1.5,3) {$\hleft 133  \hright$};
    \node (123) at (0,1.5) {$\hleft 123  \hright$};
    
    \draw[->] (333) [red, dashed]--node[red]{$\times$} node[right,red]{$1$} (233);
    \draw[->] (233) [green, double]--node[right,green]{$0$} (223);
    \draw[->] (233) [green, double]-- node[left,green]{$0$}(133);
    \draw[->] (223) [green, double]-- node[right,green]{$0$} (123);
    \draw[->] (133) [green, double]-- node[left,green]{$0$} (123);
    \end{tikzpicture}
    \caption{$w = [231]$}
\end{subfigure}
\hfill
\begin{subfigure}{0.3\textwidth}
    \centering
    \begin{tikzpicture}
    \node (333) at (0,6) {$\hleft 333  \hright$};
    \node (233) at (0,4.5) {$\hleft 233  \hright$};
    \node (223) at (1.5,3) {$\hleft 223  \hright$};
    \node (133) at (-1.5,3) {$\hleft 133  \hright$};
    \node (123) at (0,1.5) {$\hleft 123  \hright$};
    
    \draw[->] (333) [green, double]--node[right,green]{$0$} (233);
    \draw[->] (233) [green,double]--node[right,green]{$0$} (223);
    \draw[->] (233) [blue]--node[left,blue]{$a_{21} = 0$} (133);
    \draw[->] (223) [blue]--node[right,blue]{$a_{21}= 0$} (123);
    \draw[->] (133) [green,double]-- node[left,green]{$0$}(123);
    \end{tikzpicture}

    \caption{$w = [312]$}
\end{subfigure}

\vspace{2em}

\begin{subfigure}{0.3\textwidth}
    \centering
    \begin{tikzpicture}
    \node (333) at (0,6) {$\hleft 333  \hright$};
    \node (233) at (0,4.5) {$\hleft 233  \hright$};
    \node (223) at (1.5,3) {$\hleft 223  \hright$};
    \node (133) at (-1.5,3) {$\hleft 133  \hright$};
    \node (123) at (0,1.5) {$\hleft 123  \hright$};
    
    \draw[->] (233) [red, dashed]--node[red]{$\times$}node[left,red]{$1$}  (133); 
    \draw[->] (223) [red,dashed]--node[red]{$\times$}  node[right,red]{$1$}(123); 
    \draw[->] (333) [green, double]-- node[right,green]{$0$}(233); 
    \draw[->] (233) [green,double]-- node[right,green]{$0$}(223); 
    \draw[->] (133) [green,double]-- node[left,green]{$0$}(123); 
    \end{tikzpicture}

    \caption{$w = [213]$}
\end{subfigure}
\hfill
\begin{subfigure}{0.3\textwidth}
    \centering
    \begin{tikzpicture}
    \node (333) at (0,6) {$\hleft 333  \hright$};
    \node (233) at (0,4.5) {$\hleft 233  \hright$};
    \node (223) at (1.5,3) {$\hleft 223  \hright$};
    \node (133) at (-1.5,3) {$\hleft 133  \hright$};
    \node (123) at (0,1.5) {$\hleft 123  \hright$};
    
    \draw[->] (233) [green,double]--  node[left,green]{$0$}(133); 
    \draw[->] (223) [green,double]-- node[right,green]{$0$}(123); 
    \draw[->] (333) [green,double]-- node[right,green]{$0$}(233); 
    \draw[->] (233) [green,double]-- node[right,green]{$0$}(223); 
    \draw[->] (133) [green,double]-- node[left,green]{$0$}(123); 
    \end{tikzpicture}
    
    \caption{$w=[132]$}
\end{subfigure}
\hfill
\begin{subfigure}{0.3\textwidth}
    \centering
    \begin{tikzpicture}
    \node (333) at (0,6) {$\hleft 333  \hright$};
    \node (233) at (0,4.5) {$\hleft 233  \hright$};
    \node (223) at (1.5,3) {$\hleft 223  \hright$};
    \node (133) at (-1.5,3) {$\hleft 133  \hright$};
    \node (123) at (0,1.5) {$\hleft 123  \hright$};
    
    \draw[->] (233) [green,double]--node[left,green]{$0$} (133); 
    \draw[->] (223) [green,double]--node[right,green]{$0$} (123); 
    \draw[->] (333) [green,double]--node[right,green]{$0$} (233); 
    \draw[->] (233) [green,double]-- node[right,green]{$0$}(223); 
    \draw[->] (133) [green,double]-- node[left,green]{$0$}(123); 
    \end{tikzpicture}
    \caption{$w=[123]$}
\end{subfigure}
\caption{Equations cutting the Hessenberg varieties at Schubert cell $C_w$ associated to Jordan type $(2,1)$.}
\label{fig: diagrams for lambda=(2,1)}
\end{figure}

%% file: Thm1/example_2_1_table.tex
\begin{align*}
\end{align*}

\begin{figure}[ht!]
    \centering
    $\begin{array}{|c|c||c||c||c|c||c|}
    \hline 
        \multicolumn{2}{|c|}{} &\multicolumn{5}{|c|}{h}\\
         \multicolumn{2}{|c|}{w \qquad   \text{matrix}} & \hleft 333  \hright & \hleft 233  \hright &  \hleft 133  \hright  & \hleft 223  \hright & \hleft 123  \hright \\ \hline \hline 
         [321] & \begin{bsmallmatrix}
             a_{11} & a_{12} & 1 \\
             a_{21} & 1 & 0  \\
             1  & 0 & 0
         \end{bsmallmatrix} & 0 & a_{21} & a_{21}  & 1 & 1  \\ \hline  \hline
         [231] & \begin{bsmallmatrix}
             a_{11} & a_{12} & 1 \\
             1 & 0 & 0  \\
             0  & 1 & 0
         \end{bsmallmatrix} & 0 & 1 & 1 & 1 & 1  \\ \hline 
         [312] & \begin{bsmallmatrix}
             a_{11} & 1 & 0 \\
             a_{21} & 0 & 1  \\
             1  & 0 & 0
         \end{bsmallmatrix} & 0 &  0 & a_{21}  & 0 & a_{21}  \\ \hline  \hline
         [213] & \begin{bsmallmatrix}
             a_{11} & 1 & 0 \\
             1 & 0 & 0  \\
             0  & 0 & 1
         \end{bsmallmatrix} & 0 & 0  & 1  & 0 & 1  \\ \hline
         [132] & \begin{bsmallmatrix}
             1 & 0 & 0 \\
             0 & a_{22} & 1 \\
             0  & 1 & 0
         \end{bsmallmatrix}  & 0  & 0  & 0  & 0 & 0  \\ \hline \hline
         [ 123 ] \hright & \begin{bsmallmatrix}
             1 & 0 & 0 \\
             0 & 1 & 0  \\
             0 & 0 & 1
         \end{bsmallmatrix}  & 0  &  0 &  0 & 0 & 0  \\ \hline 
    \end{array}$
    
    \caption{Table of equations cutting out the Hessenberg varieties at each Schubert cell $C_w$ for Jordan type $(2,1)$} 
    \label{table: (2,1)} 
\end{figure}

%% file: Julianna_connection/Tymoczko_intro.tex
In this section, we review Tymoczko's results and proof strategies regarding the existence of a paving by affines for Hessenberg varieties. 
\begin{theorem}[{\cite[Corollary 6.3]{Tymoczko2006}}]
\label{thm: Cor 6.3}
The Hessenberg variety $\mathcal{H}(X, h)$ is paved by affines for all $X$ and $h$. More specifically, we can choose $X$ within its conjugacy class so that:
\begin{enumerate}
    \item $C_w \cap \mathcal{H}(X, h) \neq \emptyset$ if and only if $w^{-1} X w \in H(h)$, and
\item If $C_w \cap \mathcal{H}(X, h) \neq \emptyset$ then $C_w \cap \mathcal{H}(X, h) \simeq \mathbb{C}^{d_w}$ for some nonnegative integer $d_w$. 
\end{enumerate}
\end{theorem}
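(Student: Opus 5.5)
We first reduce to the nilpotent case, then treat nilpotent $X$ with the Main Theorem (Theorem~\ref{thm: Thm 1}) together with Lemma~\ref{intro:lem1}. By Remark~\ref{rmk: H(X,h) depend only on conjugacy class} we may replace $X$ by any conjugate. Write $X = X_s + X_n$ for its Jordan decomposition, and conjugate so that the generalized eigenspaces are spanned by consecutive blocks of standard basis vectors, suitably interleaved. The one-parameter torus acting by distinct scalars on these blocks commutes with $X$, so it preserves $\mathcal{H}(X,h)$ and every $C_w$. The standard fibration argument (as in Tymoczko's reduction) then expresses each $C_w\cap\mathcal{H}(X,h)$ as an affine bundle over a product of cells of Hessenberg varieties of the nilpotent restrictions $X_n|_{\text{block}}$. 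Hence it suffices to prove (1) and (2) for nilpotent $X$. For nilpotent $X$ we take the representative in strictly highest form described in the introduction: every column is $0$ or a standard basis vector, and the pivot map $\phi$ is nondecreasing and strictly increasing once positive. Such a representative exists in every nilpotent conjugacy class, by permuting the basis of a Jordan form.

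\emph{Nonemptiness, part (1).} The permutation flag $wB$ (all Schubert variables equal to $0$) lies in $C_w$, and $wB\in\mathcal{H}(X,h)$ if and only if $w^{-1}Xw\in H(h)$. So the ``if'' direction is immediate. For the converse, we use that a matrix in strictly highest form is homogeneous for a suitable one-parameter subgroup $\lambda(t)=\operatorname{diag}(t^{d_1},\dots,t^{d_n})$, in the sense that $\lambda(t)X\lambda(t)^{-1}=t^{c}X$ for a constant $c$. The weights are chosen along the strings of $\phi$: $d_{\phi(j)}=d_j+c$ whenever $\phi(j)>0$. Since $\mathcal{H}(t^cX,h)=\mathcal{H}(X,h)$, this torus preserves $\mathcal{H}(X,h)\cap C_w$. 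We choose the $d_i$ so that, in addition, $\lambda(t)$ contracts $C_w$ to $wB$ as $t\to 0$; this should be possible by adding a large multiple of $(1,2,\dots,n)$ to the weights, adjusting $c$ accordingly. Closedness of $\mathcal{H}(X,h)$ then gives $wB\in\mathcal{H}(X,h)$ whenever the intersection is nonempty.

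\emph{Affineness, part (2).} Fix a Hessenberg path from $h$ to $h^{\operatorname{top}}$ and let $S_{MT}$ be the set of nonzero polynomials $f_{(k,m)}$ produced by Theorem~\ref{thm: Thm 1} along its edges. By the corollary to the Main Theorem, $S_{MT}$ cuts out $C_w\cap\mathcal{H}(X,h)$ in $C_w$. By Lemma~\ref{intro:lem1} it then suffices to verify two conditions.
\begin{enumerate}
\item Each nonzero $f_{(k,m)}$ has the form $-a+g$, where $a$ is a Schubert variable and $g$ does not involve $a$.
\item There is a total order on $S_{MT}$ in which the distinguished variable of each equation does not appear in any later equation.
\end{enumerate}
For (1), we expand $[Xv_k]_{w(m)}$ using that $Xv_k$ has entries $(v_k)_j$ placed in rows $\phi(j)$. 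The leading piece is a single Schubert variable $a_{j,k}$ with $\phi(j)=w(m)$, or the constant $1$ when the pivot is hit. The recursive corrections $f_{(k,j)}v_j$ contribute only products involving the variables of columns $j$ with $w(j)>w(m)$. The sign and the distinguished variable are then fixed by the chosen orientation. For (2), we order the equations by $(k,w(m))$ lexicographically, with larger pivot rows processed first, and check through the recursion \eqref{eq: thm 1 recursive equation} that $a_{j,k}$ cannot reappear later. Lemma~\ref{intro:lem1} then also gives $d_w=\dim C_w-|S_{MT}|$.

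The main obstacle is (1) and (2) for the nilpotent case, particularly showing that the recursive corrections never cancel the linear term or reintroduce it. The strict monotonicity of $\phi$ is exactly what should prevent this, and we would begin by proving by induction on $n-w(m)$ that every variable appearing in $f_{(k,m)}$ lies in a row strictly above, or a column strictly before, its distinguished variable. A secondary difficulty is carrying out the reduction from general $X$ to nilpotent $X$ cleanly.
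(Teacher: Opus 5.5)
\textbf{The gap is in your Property (1), and it carries over to (2).} You take the linear term to be $[Xv_k]_{w(m)}=a_{j,k}$ with $\phi(j)=w(m)$, and you assert that the corrections $f_{(k,j)}v_j$ with $w(j)>w(m)$ contribute only products.

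The second claim is false. By Lemma~\ref{lem: pivot value 1}, $Xv_k$ has pivot value $1$ in row $p=\phi(w(k))$. So the correction with $w(j)=p$ has coefficient $f_{(k,j)}=1$ and contributes the single variable $-[v_{w^{-1}(p)}]_{w(m)}$.

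Your term also vanishes identically whenever row $w(m)$ of $X$ is zero. Take the matrix unit $X=E_{23}$ (type $(2,1)$, $\phi=(0,0,2)$), $w=[321]$ and $h=\langle 223\rangle$. Then $Xv_1=e_2=v_2-a_{12}v_3$, so $f_{(1,3)}=-a_{12}\in S_{MT}$ comes entirely from the pivot correction.

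Where your variable does exist, it can make (2) unsatisfiable. Take type $(3,2)$ in the form of Lemma~\ref{lem: JCF to HF} ($\phi=(0,0,1,2,4)$), $w=[54321]$ and $h=\langle 24555\rangle$, a nonempty case. One gets $f_{(1,3)}=-a_{32}$ and $f_{(2,5)}=a_{32}-a_{14}$. Your recipe attaches $a_{32}=[Xv_2]_1$ to $f_{(2,5)}$, but $a_{32}$ is the only variable of $f_{(1,3)}$, so no order works.

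Your proposed inductive invariant fails as well. For type $(4)$ and $w=[4321]$, your variable for $f_{(1,4)}=b-d-(c-e)f$ is $b=a_{21}$, while $c=a_{31}$ lies neither in a higher row nor in an earlier column.

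\textbf{What works, and two smaller points.} The choice that works uniformly is the pivot-correction variable $a^{(k,m)}=a_{w(m),\,w^{-1}(\phi(w(k)))}$ of Proposition~\ref{prop: f(k,m) has a linear term}.
\begin{itemize}
\item It is a genuine Schubert variable. Nonemptiness forces $w^{-1}(p)\le h(k)<m$, since otherwise the unit $f_{(k,w^{-1}(p))}=1$ lies in $S_{MT}$. And $f_{(k,m)}\neq 0$ forces $w(m)<p$.
\item It occurs exactly once, with coefficient $-1$. Indeed $[Xv_k]_{w(m)}$ sits in column $k$, the other row-$w(m)$ terms sit in columns $w^{-1}(q)$ with $w(m)<q<p$, and the $c^{(q)}$ involve only rows below row $w(m)$.
\item All variables of $f_{(k',m')}$ lie in rows $\ge w(m')$, and those in row $w(m')$ lie in columns $w^{-1}(q)$ with $q\le\phi(w(k'))$. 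So the order that works is by $w(m)$ increasing, with ties broken by $w(k)$ decreasing.
\item With this choice a $k$-first order fails. For $w=[4321]$ above, $f_{(1,3)}=c-e$, with variable $e$, must follow both $f_{(1,4)}$ and $f_{(2,4)}$.
\end{itemize}

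First smaller point: your nonemptiness argument takes a different route from the paper. The paper only observes that, once (1) holds, every equation of a nonempty intersection has zero constant term, so $wB$ lies in it. Your torus argument is sound, but your recipe for the weights is not. Adding a multiple of $(1,\dots,n)$ shifts $d_{\phi(j)}-d_j$ by a multiple of $\phi(j)-j$, which is not constant (e.g.\ for $\phi=(0,0,1,2,4)$). Instead, let $s(j)$ be the number of applications of $\phi$ needed to move $j$ into $\{1,\dots,\dim\ker X\}$, and $b(j)$ the landing point. Set $d_j=-(N s(j)+b(j))$ with $N>n$, so the homogeneity constant is $N$. Strictly highest form makes $s$ nondecreasing in $j$ and $b$ strictly increasing on each level set of $s$. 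Hence the $d_j$ strictly decrease, and $\lambda(t)$ contracts $C_w$ to $wB$.

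Second smaller point: the reduction from general $X$ does not appear in the paper, whose rederivation treats only nilpotent $X$. As written it is only an assertion. Torus limits give affine bundles on smooth varieties, but not automatically on a possibly singular $\mathcal{H}(X,h)$.
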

\noindent To prove this result, Tymoczko used a particular type of a nilpotent operator, called the \emph{highest form}. To define this notion,
we introduce the following map $\phi$ on $\{1,\cdots, n\}$, which describes the row-indices of the pivots.
Assume that $X$ is an $n\times n$ nilpotent matrix whose entries take value either 0 or 1.
We denote by $\phi$ the map sending the column index $i$ to the row-index of the $i$-th column pivot of $X$, with the notation that $\phi(i) = 0$ if the $i$-th column is the zero vector. Explicitly, $\phi$ is determined by the following assignment:
    \begin{equation}\begin{cases}
    \label{eq: definition of phi}
        Xe_i = e_{\phi(i)} &\text{if }e_i \notin \ker X , \\
        \phi(i) = 0 &\text{if }e_i \in \ker X.
    \end{cases}\end{equation}
Using $\phi$, a nilpotent operator in \emph{highest form} is the matrix whose entries take value either 0 or 1, and the row indices of the pivots  $\{\phi(i)\}_{i=1}^n$ form a non-decreasing sequence.  

In this paper, we use a stronger notion, called a matrix in \emph{strictly highest form}. 
\begin{definition}
    \label{def: highest form}
    A nilpotent matrix is in \emph{strictly highest form} if its entries take value 0 or 1, and  the row indices of its pivots $\{\phi(i)\}_{i=1}^n$ form a strictly increasing sequence once positive. 
\end{definition}

\begin{ex}
\label{ex: highest form of (2,2)}
The Jordan canonical form $\begin{bsmallmatrix}
            0 & 1 &  0 & 0 \\
            0 & 0 &  0 & 0 \\
            0 & 0 &  0 & 1 \\
            0 & 0 &  0 & 0 
        \end{bsmallmatrix}$ of Jordan type $(2,2)$ is not in  highest form. On the other hand, the matrix $\begin{bsmallmatrix}
            0 & 0 &  1 & 0 \\
            0 & 0 &  0 & 1 \\
            0 & 0 &  0 & 0 \\
            0 & 0 &  0 & 0 
        \end{bsmallmatrix}$ is also of Jordan type $(2,2)$, and it's in strictly highest form. The matrix $\begin{bsmallmatrix}
            0 & 0 &  1 & 1 \\
            0 & 0 &  0 & 0 \\
            0 & 0 &  0 & 0 \\
            0 & 0 &  0 & 0 
        \end{bsmallmatrix}$ is in highest form but not in strictly highest form.
\end{ex}

\begin{remark}
    \label{rmk: assume phi is increasing}
    Strictly highest forms induce the following nice property on indices. If $X$ is in strictly highest form $f_{(k,m)}$ is a nonzero equation obtained by Theorem \ref{thm: Thm 1}, then the row-pivot index $p=\phi(w(k))$ of $Xv_k$ must be positive. In particular, by the increasing property of $\phi$ once positive, we obtain a well-defined relation $k = w^{-1}(\phi^{-1}(p))$. Therefore, in such a case, we may interchangeably work with variables $(p, r) = (\phi(w(k)), w(m))$ instead of $(k,m)$.
\end{remark}

\begin{remark}
    The definition of highest form stated above is stronger than that introduced by Tymoczko in \cite{Tymoczko2006}. Nevertheless, by Remark \ref{rmk: H(X,h) depend only on conjugacy class} and the following Lemma \ref{lem: JCF to HF}, all of the statements involving \emph{Hessenberg varieties} associated with nilpotent operators in Section \ref{sec: rederivation} remain valid whether we assume $X$ is in highest form in the sense of \cite{Tymoczko2006} or we assume $X$ is in strictly highest form.
\end{remark}

Dewitt and Harada \cite{dewitt2012poset} provided a construction of a nilpotent matrix in highest form for any Jordan type. Moreover, one can verify that the constructed matrix is also in strictly highest form. The following lemma is a simplified version of the construction.


\begin{lemma} [{\cite[Theorem 3.21]{dewitt2012poset}}]
    \label{lem: JCF to HF}
    Suppose that $X$ is the Jordan canonical form of type $\lambda = (\lambda_1,\cdots, \lambda_k)$ in non-decreasing order, i.e.,  $\lambda_i \geq \lambda_{i+1}$ for all $i$. Consider the Young diagram $Y$ associated to $\lambda$, and fill in the numbers $1,\cdots, n$ from bottom to top in each column, from left to right. Denote the value of the Young tableau at $(i,j)$ by $Y_{i,j}$. Then the matrix $X^h$ sending $e_{Y_{i,j}}$ to $e_{Y_{i,j-1}}$ for $j > 1$ and to zero for $j = 1$ is in (strictly) highest form. 
\end{lemma}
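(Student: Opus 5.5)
The plan is to verify directly from the construction that $X^h$ has the required shape, and to treat the parenthetical conjugacy claim (that $X^h$ has Jordan type $\lambda$) as a bookkeeping check. Index cells of the Young diagram $Y$ by $(i,j)$ with $i$ the row (rows numbered from top to bottom, row $i$ of length $\lambda_i$) and $j$ the column. The filling rule (columns left to right, each column bottom to top) has two consequences, which I will use repeatedly. First, every label in column $j$ is smaller than every label in column $j'$ whenever $j<j'$. Second, within a fixed column, $Y_{i,j}<Y_{i',j}$ if and only if $i>i'$, since the lower cell gets the smaller label.

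First I check that $X^h$ is nilpotent of Jordan type $\lambda$ with $0/1$ entries. For each row $i$, the basis vectors $e_{Y_{i,\lambda_i}}\mapsto e_{Y_{i,\lambda_i-1}}\mapsto\cdots\mapsto e_{Y_{i,1}}\mapsto 0$ form a single chain of length $\lambda_i$. The labels are a bijection onto $\{1,\dots,n\}$, so these chains partition the standard basis. Hence $X^h$ is a permutation conjugate $P^{-1}XP$ of the Jordan form $X$, every entry of $X^h$ is $0$ or $1$, and each column has at most one nonzero entry, so the map $\phi$ of \eqref{eq: definition of phi} is well defined. Concretely, $\phi(Y_{i,j})=Y_{i,j-1}$ for $j>1$, and $\phi(Y_{i,1})=0$.

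Next I show that the zero values of $\phi$ come first. The indices with $\phi=0$ are exactly the labels of column $1$. By the first consequence above, these are precisely $1,\dots,\lambda'_1$, where $\lambda'_1$ is the number of rows. So $\phi$ vanishes on an initial segment and is positive afterwards, which gives the ``non-decreasing'' part at the transition.

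The main step is strict monotonicity on the positive part. Take labels $a=Y_{i,j}<b=Y_{i',j'}$ with $j,j'\ge 2$; I will split into two cases. If $j<j'$, then $\phi(a)$ lies in column $j-1$ and $\phi(b)$ in column $j'-1>j-1$, so $\phi(a)<\phi(b)$ by the first consequence. If $j=j'$, the second consequence gives $i>i'$. Both cells $(i,j-1)$ and $(i',j-1)$ exist because $Y$ is a Young diagram. They lie in the same column with $i>i'$, so again $\phi(a)=Y_{i,j-1}<Y_{i',j-1}=\phi(b)$. The case $j>j'$ cannot occur, since it would force $a>b$.

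I expect the only real obstacle to be pinning down conventions: reading ``bottom to top'' with rows ordered so that longer rows sit on top, and making sure the column $j-1$ cells exist. Once the two ordering consequences of the filling rule are stated, the argument is purely combinatorial. Since $X^h$ is conjugate to $X$, Remark \ref{rmk: H(X,h) depend only on conjugacy class} lets us replace $X$ by $X^h$ in all later statements.
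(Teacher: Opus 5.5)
Your proof is correct. The paper gives no argument of its own for this lemma: it attributes the construction, and its highest-form property in Tymoczko's weaker sense, to Dewitt--Harada, and only asserts that ``one can verify'' the strict version.

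What you have written is a self-contained version of that verification. The two ordering facts about the column-by-column, bottom-to-top filling show that $\phi$ vanishes exactly on the initial segment $\{1,\dots,\lambda'_1\}$ and is strictly increasing afterwards. Your case split ($j<j'$ versus $j=j'$, with $j>j'$ impossible) is complete, and the row chains give Jordan type $\lambda$.

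The citation buys brevity and a link to the poset-pinball literature. Your direct route buys independence, and it certifies the two strict properties the later sections actually use:
\begin{enumerate}
\item $\phi$ is injective on positive values. This is used in Remark~\ref{rmk: assume phi is increasing} and in the step $\phi^{-1}(p)>\phi^{-1}(p')\Rightarrow p>p'$ in Proposition~\ref{prop: existence of total order}.
\item The zero columns are exactly $e_1,\dots,e_d$ with $d=\dim\ker X^h=\lambda'_1$. This is used in Lemma~\ref{lem: pivot value 1} and in the proof of Theorem~\ref{thm: Thm 2}.
\end{enumerate}
You should state the equality $\dim\ker X^h=\lambda'_1$ explicitly. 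It follows at once from your chain decomposition, since each row contributes exactly one kernel vector.

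Two cosmetic points. The only parenthetical in the statement is ``(strictly)''; conjugacy to $X$ is implicit rather than claimed. Also, ``non-decreasing order'' there means $\lambda_i\ge\lambda_{i+1}$. That is the convention you used, and it matches the paper's $(3,2,1)$ example, where your formula gives $\phi=(0,0,0,2,3,5)$.
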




\begin{ex}
Let $X$ be a nilpotent matrix of type $\lambda = (3, 2, 1)$ given by
$$
X = \begin{bmatrix}
    0 & 1 & 0 & 0 & 0 & 0 \\
    0 & 0 & 1 & 0 & 0 & 0 \\
    0 & 0 & 0 & 0 & 0 & 0 \\
    0 & 0 & 0 & 0 & 1 & 0 \\
    0 & 0 & 0 & 0 & 0 & 0 \\
    0 & 0 & 0 & 0 & 0 & 0 
\end{bmatrix}.
$$
This matrix acts on the standard basis vectors as
$$
e_3 \mapsto e_2 \mapsto e_1 \mapsto 0, \quad e_5 \mapsto e_4 \mapsto 0, \quad \text{and} \quad e_6 \mapsto 0.
$$
    We encode these data into the Young diagram associated with the Jordan type  $(3,2,1)$ as follows: we read the labels from right to left, which induces the mapping of the standard basis vectors with the corresponding labels.
    \begin{align*}\begin{ytableau}
3 & 5 & 6 \\
2 & 4 \\
1
\end{ytableau} \qquad \leadsto \qquad 
    \begin{cases}
    0  \mapsfrom e_3 \mapsfrom e_5 \mapsfrom e_6 \\
    0 \mapsfrom e_2 \mapsfrom e_4 \\
    0  \mapsfrom e_1.
    \end{cases}
    \end{align*}
    The matrix corresponding to this operation is in highest form, given by
    \begin{align*}
        \begin{bmatrix}
            0 & 0 & 0 & 0 & 0 & 0 \\
            0 & 0 & 0 & 1 & 0 & 0 \\
            0 & 0 & 0 & 0 & 1 & 0 \\
            0 & 0 & 0 & 0 & 0 & 0 \\
            0 & 0 & 0 & 0 & 0 & 1 \\
            0 & 0 & 0 & 0 & 0 & 0 
        \end{bmatrix}.
    \end{align*}
\end{ex}

Using the highest form, Tymoczko also provided a formula for the dimension $d_w$ of the affine paving, as follows.
\begin{theorem}[{\cite[Corollary 6.3]{Tymoczko2006}}]
\label{thm: Tymoczko's dw}
Assume that $X$ is in the highest form. Then the dimension $d_w$ of the Schubert cell  $C_w \cap \mathcal{H}(X, h) $ is given by 
    $$d_w = \left|\left\{(i, k) | k > i, w^{-1}(i) > w^{-1}(k), \text{ and } h(w^{-1}(j)) \geq w^{-1}(i) \text{ if $X_{kj}$ is nonzero}\right\}\right|.$$
\end{theorem}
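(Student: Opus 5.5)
The plan is to derive the formula from Lemma~\ref{intro:lem1} (Lemma~\ref{lem: reduction of affineness to total order}) together with Theorem~\ref{thm: Thm 1}. By Remark~\ref{rmk: H(X,h) depend only on conjugacy class} and Lemma~\ref{lem: JCF to HF} we may take $X$ in strictly highest form, and we assume $C_w \cap \cH(X,h) \neq \emptyset$. Let $S_{MT}$ be the set of nonzero equations $f_{(k',m)}$ collected along a Hessenberg path from $h$ to $h^{\operatorname{top}}$. These are exactly the $f_{(k',m)}$ with $h(k') < m \le n$ that are not identically zero. Granting Propositions~\ref{prop: f(k,m) has a linear term} and~\ref{prop: existence of total order}, Lemma~\ref{intro:lem1} gives
\[
d_w = \dim C_w - |S_{MT}|.
\]
So the whole proof reduces to a counting problem.

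First I identify $\dim C_w$ with the set of pairs in the formula. In the canonical matrix, the entry in row $i$ and column $c$ is a free Schubert variable exactly when $i < w(c)$ and $w^{-1}(i) > c$. Writing $k = w(c)$, this is the condition $k > i$ and $w^{-1}(i) > w^{-1}(k)$. Hence the free variables are in bijection with the inversion pairs $(i,k)$. It therefore suffices to show that the nonzero equations in $S_{MT}$ are in bijection with the inversion pairs $(i,k)$ for which some $j$ has $X_{kj} \neq 0$ and $h(w^{-1}(j)) < w^{-1}(i)$.

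Second I analyze $f_{(k',m)}$ via the recursion $(\ast)$. Since $v_{k'}$ has pivot $e_{w(k')}$ with zeros below, strictly highest form gives that $Xv_{k'}$ is zero when $w(k') \in \ker X$. Otherwise $Xv_{k'}$ has its lowest nonzero entry $1$ in row $k := \phi(w(k'))$, with $X_{k,w(k')} = 1$. Put $j = w(k')$, $i = w(m)$ and $p = w^{-1}(k)$.
\begin{enumerate}
\item In the Gaussian elimination of $(\ast)$ the first nonzero coefficient is $c^{(k)} = 1$, attached to $v_p$. All rows below $k$ contribute $0$, so $f_{(k',m)} = 0$ whenever $i > k$.
\item Nonemptiness forces $Xv_{k'} \in V_{h(k')}$. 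Since $c^{(k)} = 1$, this gives $p \le h(k') < m$.
\item For $i < k$, subtracting $v_p$ leaves the term $-a_{i,p}$ in row $i$. This entry is a genuine variable precisely when $w^{-1}(i) = m > p$, which holds by the previous step.
\end{enumerate}
So every nonzero $f_{(k',m)}$ corresponds to the inversion pair $(i,k) = (w(m), \phi(w(k')))$ satisfying $X_{kj} \neq 0$ and $h(w^{-1}(j)) = h(k') < m = w^{-1}(i)$. Conversely, given such a pair, strict increase of $\phi$ makes $j$ unique, so $k' = w^{-1}(j)$ and $m = w^{-1}(i)$ are recovered. The resulting $f_{(k',m)}$ contains the linear term $-a_{i,p}$, which is exactly the content of Proposition~\ref{prop: f(k,m) has a linear term}, and is therefore nonzero. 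This is the bijection needed, and subtracting from the inversion count yields $d_w$.

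The main obstacle is the nonvanishing step: $f_{(k',m)}$ must not collapse to $0$ through cancellation of $-a_{i,p}$ against other terms of $(\ast)$. It must also not be a nonzero constant when $C_w \cap \cH(X,h)$ is nonempty, since Lemma~\ref{intro:lem1} would then be inapplicable. I would handle this by checking that $a_{i,p}$ occurs in $(\ast)$ only through the $v_p$ subtraction. Every other $f_{(k',j')}v_{j'}$ term has pivot row $w(j') > w(m)$ and involves column $j' \neq p$ or a coefficient independent of $a_{i,p}$. This is the same independence statement used in Proposition~\ref{prop: f(k,m) has a linear term}, so I would rely on that result there.
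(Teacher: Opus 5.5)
Your proposal is correct and follows essentially the paper's argument. The paper also (i) uses Lemma~\ref{lem: reduction of affineness to total order} together with Propositions~\ref{prop: f(k,m) has a linear term} and~\ref{prop: existence of total order} to identify the codimension with $|S_{MT}|$, (ii) shows that for nonempty $C_w\cap\cH(X,h)$ one has $f_{(k,m)}\neq 0$ exactly when $w(m)<\phi(w(k))$, because the term $-[v^{(p)}]_r$ cannot be cancelled, and (iii) matches this count to Tymoczko's formula by relabeling $(r,s)=(i,w^{-1}(k))$; your bijection $(k',m)\mapsto (w(m),\phi(w(k')))$ onto the complementary inversion pairs is a cleaner version of step (iii).
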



%% file: Julianna_connection/highest_form.tex
\label{sec: affineness of strata}

First, we rederive Statement (2) of Theorem \ref{thm: Cor 6.3}, on the affineness condition of the nonempty intersection $C_w \cap \cH(X,h)$ when the nilpotent operator $X$ is in strictly highest form. The proof strategy is as follows. Let $S_{MT}$ be the collection of nonzero equations $f_{(k,m)}$ cutting out $C_w \cap \cH(X,h)$ in $C_w$, which are obtained by applying the Main Theorem Theorem \ref{thm: Thm 1} to each edge of a fixed Hessenberg path $h$ to the top Hessenberg function $h^{\operatorname{top}}$. Because $C_w$ is an affine space, to show that a nonempty intersection  $C_w \cap \cH(X,h)$ is an affine space, it suffices to show that there is a total ordering $f_{(k_1,m_1)} \prec f_{(k_2,m_2)} \prec \cdots $ of equations in $S_{MT}$ with the property that for each $i$, cutting the intermediate subscheme $V(f_{(k_1,m_1)},\cdots, f_{(k_{i-1},m_{i-1})})$ in $C_w$ by the equation $f_{(k_i,m_i)}$ is equivalent to eliminating a free variable of the coordinate ring of the intermediate subscheme. The following Lemma, also introduced as Lemma \ref{intro:lem1} in the Introduction, describes  the conditions on the strict total order $\prec$ for which this property holds in more generality.

\begin{lemma}[Also introduced as Lemma \ref{intro:lem1}]
\label{lem: reduction of affineness to total order}
Assume that  $X$ is a nilpotent operator and the intersection $C_w \cap \mathcal{H}(X,h)$ is nonempty. Let $S$ be a finite set of nonzero equations cutting $C_w \cap \mathcal{H}(X,h)$ in $C_w$.
To show $C_w \cap \mathcal{H}(X,h)$ is an affine space, it suffices to show the following two properties: 
\begin{enumerate}
    \item for each $f_\alpha \in S$, we can associate a Schubert variable $a^{(\alpha )}$ such that $f_{\alpha}$ can be decomposed as 
    $f_{\alpha} = -a^{(\alpha)} + g_{\alpha},$
    where $g_{\alpha}$ is independent of $a^{(\alpha)}$, and
    \item there exists a strict total order $\prec$ of equations in $S$ such that the variable $a^{(\alpha)}$ does not appear in any of the equations succeeding $f_{\alpha}$ in $S$.
\end{enumerate}
Moreover, if Properties (1) and (2) hold, then the codimension of $C_w \cap \cH(X,h)$ in $C_w$ is the cardinality of $S$.
\end{lemma}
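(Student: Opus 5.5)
The plan is to show that, under Properties (1) and (2), the coordinate ring of $C_w \cap \mathcal{H}(X,h)$ is a polynomial ring in the Schubert variables of $C_w$ other than the $a^{(\alpha)}$. Write $C_w = \operatorname{Spec} k[a_{i,j}]$, with the Schubert variables $a_{i,j}$ of the canonical matrix of $C_w$ as free coordinates. Enumerate $S = \{f_1 \prec f_2 \prec \cdots \prec f_s\}$ using the order from Property (2), and write $f_i = -a^{(i)} + g_i$. We want to show that the ideal $(f_1,\dots,f_s)$ is generated by elements of the form $a^{(i)} - \tilde g_i$. Here $\tilde g_i$ is a polynomial in the \emph{free} variables, meaning the Schubert variables not in $\{a^{(1)},\dots,a^{(s)}\}$. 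Once this holds, the quotient $k[a_{i,j}]/(f_1,\dots,f_s)$ is isomorphic to the polynomial ring in the free variables. So $C_w\cap \mathcal{H}(X,h) \cong \mathbb{A}^{\dim C_w - s}$, which is both affineness and the codimension count.

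The first step is to check that the $a^{(i)}$ are pairwise distinct. Suppose $a^{(i)} = a^{(j)}$ with $i<j$. Then $a^{(i)}$ appears in $f_j$ with coefficient $-1$, since $g_j$ is independent of $a^{(j)}$. This contradicts Property (2). Hence exactly $s$ distinct variables are eliminated.

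Next, by Property (2), $g_i$ involves none of $a^{(1)},\dots,a^{(i-1)}$, and by Property (1) it does not involve $a^{(i)}$. So $g_i$ is a polynomial in the free variables together with $a^{(i+1)},\dots,a^{(s)}$. We then eliminate by descending induction on $i$, starting from $i=s$. Here $g_s$ depends only on free variables, so set $\tilde g_s = g_s$. Assume $\tilde g_{i+1},\dots,\tilde g_s$ have been constructed in the free variables alone, with
\[
(f_{i+1},\dots,f_s) = (a^{(i+1)}-\tilde g_{i+1},\dots,a^{(s)}-\tilde g_s).
\]
Substituting $a^{(j)}\mapsto \tilde g_j$ for $j>i$ into $g_i$ gives a polynomial $\tilde g_i$ in the free variables. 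Moreover $g_i - \tilde g_i$ lies in the ideal generated by the $a^{(j)}-\tilde g_j$ for $j>i$. Therefore $(f_i,\dots,f_s) = (a^{(i)}-\tilde g_i,\dots,a^{(s)}-\tilde g_s)$. At $i=1$ the ring map $k[a_{i,j}] \to k[\text{free variables}]$, $a^{(i)}\mapsto \tilde g_i$, is surjective with kernel exactly this ideal, which gives the isomorphism above.

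The nonemptiness hypothesis is consistent with this conclusion, since the quotient is a nonzero polynomial ring; in particular no $f_\alpha$ can be a nonzero constant. Nilpotency of $X$ is not used in this reduction itself. The only delicate point is the bookkeeping in the descending substitution: Property (2) constrains $g_i$ only with respect to \emph{earlier} variables, so the substitution must run from the last equation backward. I expect this ordering argument to be the main point to state carefully; the rest is routine.
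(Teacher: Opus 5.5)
Your argument is correct. It differs from the paper's mainly in the direction of elimination.

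The paper eliminates the $\prec$-minimal equation first. Since $a^{(\alpha_1)}$ occurs in no other element of $S$ and $g_{\alpha_1}$ does not involve it, one gets $k[A_1][a^{(\alpha_1)}]/((S_1)+(-a^{(\alpha_1)}+g_{\alpha_1}))\cong k[A_1]/(S_1)$ directly, with no substitution. The paper then inducts upward on $S_1$. You instead do a descending back-substitution, which produces a triangular generating set $a^{(i)}-\tilde g_i$ with each $\tilde g_i$ a polynomial in the free variables only. So your remark that the elimination ``must run from the last equation backward'' holds for the substitution method, but not for elimination in general: the forward route works and is a bit shorter, because each step only uses $R[a]/(IR[a]+(a-g))\cong R/I$ for $I\subset R$, $g\in R$.

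Your version gains three things:
\begin{enumerate}
\item an explicit parametrization of $C_w\cap\cH(X,h)$ by the free Schubert variables;
\item an explicit proof that the $a^{(\alpha)}$ are pairwise distinct, which the paper's codimension count uses only implicitly;
\item a clean separation of the free variables from the eliminated ones. The paper's $A_0$ is defined to consist only of the associated variables, which, read literally, would leave $k$ at the end of its induction.
\end{enumerate}
Your side observations are also right: nilpotency of $X$ is not used, and nonemptiness follows automatically from Properties (1) and (2).
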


\begin{proof}
    Suppose that Properties (1) and (2) hold. Let $A_0$ be the collection of Schubert variables $a^{(\alpha)}$, where $f_{\alpha} \in S$. Denote the minimal element of $S$ with respect to the strict total order $\prec $ by $f_{\alpha_1}$, and write
    \begin{align*}
        S = S_1 \sqcup \{f_{\alpha_1}\}, \quad A_0 = A_1 \sqcup \{a^{(\alpha_1)}\}.
    \end{align*}
    By the property of the strict total order, all the equations in $S_1$ do not depend on the variable $a^{(\alpha_1)}$.  
    Then the coordinate ring of $C_w \cap \mathcal{H}(X,h)$ is given by 
    \begin{align*}
        k[A_0]/(S) &\cong k[A_1][a^{(\alpha_1)}]\Big/\big((S_1)+(f_{(\alpha_1)})\big) \\
        &\cong \Big(k[A_1]/(S_1)\Big)[a^{(\alpha_1)}]\Big/(-a^{(\alpha_1)} + g_{\alpha_1}\big) 
        \cong k[A_1]/(S_1),
    \end{align*}
    where by $k[A_i]$ we mean the coordinate ring with generators $a^{(\alpha)} \in A_i$.
    Inductively, by applying the same argument  to the  (unique) minimal element of $S_1$ with respect to the total order, we obtain 
    \begin{align*}
        k[A_0]/(S) \cong k[A_1]/(S_1) \cong \cdots \cong k[A_i]/(S_i),
    \end{align*}
    where $S_i$ is the complement of the first $i$ equations in the total order, and $A_i$ is the complement of the corresponding $i$ variables associated with the $i$ equations in the first order. In particular, by setting $i$ to be the cardinality of $S$, we obtain that the coordinate ring is an affine space of codimension $|S|$.
\end{proof}

\begin{remark}
    For the remaining of the section, we are concerned with the case where $S$ is the collection $S_{MT}$ of nonzero equations cutting out $C_w \cap \cH(X,h)$ in $C_w$, obtained by applying the Main Theorem Theorem \ref{thm: Thm 1} to each edge of a fixed Hessenberg path $h$ to the top Hessenberg function $h^{\operatorname{top}}$.
\end{remark}

To illustrate the strategy for constructing a total order on $S_{MT}$ that satisfy Properties (1) and (2) above, we start with the following example. 
\begin{ex} 
    \label{ex: strict total order of f_(k,m)}
    Consider a nilpotent operator of Jordan type $(4)$ and the Schubert cell associated with $w=[4321]$. The corresponding highest form nilpotent operator $X$ and the canonical Schubert form $V$ are given by 
    $$X = \begin{bmatrix}
        0 & 1 & 0 & 0 \\
        0 & 0 & 1 & 0 \\
        0 & 0 & 0 & 1 \\
        0 & 0 & 0 & 0 
    \end{bmatrix}, \quad  V = 
    \begin{bmatrix}
        a & d & f & 1 \\
        b & e & 1 & 0 \\
        c & 1 & 0 & 0 \\
        1 & 0 & 0 & 0 
    \end{bmatrix}, \quad \text{so that }X V = \begin{bmatrix}
        b & e & \textbf{1} & 0 \\
        c & \textbf{1} & 0 & 0 \\
        \textbf{1} & 0 & 0 & 0 \\
        0 & 0 & 0 & 0 
    \end{bmatrix}.$$ 
    By expanding $Xv_k$ in terms of column vectors of $V$,  we obtain equations $f_{(k,m)}$:
\begin{alignat*}{4}
    f_{(1,1)} &= 0, \qquad & f_{(2,1)} &= 0, \qquad & f_{(3,1)} &= 0, \qquad & f_{(4,1)} &= 0 ,\\
    f_{(1,2)} &= \textbf{1} ,& f_{(2,2)} &= 0, & f_{(3,2)} &= 0, & f_{(4,2)} &= 0, \\
    f_{(1,3)} &= c-e ,& f_{(2,3)} &= \textbf{1} ,& f_{(3,3)} &= 0, & f_{(4,3)} &= 0, \\
    f_{(1,4)} &= b -d - (c-e)f, \quad & f_{(2,4)} &= e-f, \quad  & f_{(3,4)} &= \textbf{1} , \quad & f_{(4,4)} &= 0.
\end{alignat*}

Note that we are concerned with nonzero non-unit equations which can be obtained by applying Theorem \ref{thm: Thm 1}, which are $f_{(1,3)}, f_{(1,4)}$, and $f_{(2,4)}$.
To visualize the Schubert variables appearing in equations $f_{(k,m)}$, we highlight in yellow the variables appearing in $f_{(k,m)}$ in the canonical Schubert matrix. For example, the equation $f_{(1,4)}$ has variables $b,d,c,e,f$ in its expansion. Hence the variables in the Schubert cell are highlighted as follows:
    
\begin{figure}[htbp]
  \centering
  \begin{tikzpicture}[>=stealth]
    \matrix [
      matrix of math nodes,
      left delimiter={[},
      right delimiter={]},
      nodes={
        anchor=center,
        inner sep=1pt,
        minimum size=0pt 
      },
      column sep=0.5em,
      row sep=0.4em
    ] (m) {
      a & d & f & 1 \\
      b & e & 1 & 0 \\
      c & 1 & 0 & 0 \\
      1 & 0 & 0 & 0 \\
    };

    \begin{scope}[on background layer]
      \def\highlightradius{2.8mm}
      
      \fill[yellow!50] (m-1-2.center) circle (\highlightradius);
      
      \foreach \pos in {2-1, 3-1, 2-2, 1-3} {
        \fill[yellow!50] (m-\pos.center) circle (\highlightradius);
      }
    \end{scope}
  \end{tikzpicture}
  \caption{Variables occurring in $f_{(1,4)}$ highlighted in yellow.}
\end{figure}
\noindent Applying the same visualization to the nontrivial equations $f_{(2,4)}$ and $f_{(1,3)}$, we obtain the diagram in Figure \ref{fig: hilighted variables in schubert cells for all equations}. Let $a^{(1,4)} = d$, $a^{(2,4)} = f$, and $a^{(1,3)} = e$ be the Schubert variables associated with $f_{(1,4)},$ $ f_{(2,4)}$, and $f_{(1,3)}$, respectively.
\begin{figure}[ht!]
  \centering
  \setlength{\tabcolsep}{15pt} 
  \begin{tabular}{ccc}
    \begin{tikzpicture}[baseline=(m.center), >=stealth]
      \matrix [
        matrix of math nodes,
        left delimiter={[},
        right delimiter={]},
        nodes={anchor=center, inner sep=1pt, minimum size=0pt, font=\small},
        column sep=0.5em,
        row sep=0.3em
      ] (m) {
        a & d & f & 1 \\
        b & e & 1 & 0 \\
        c & 1 & 0 & 0 \\
        1 & 0 & 0 & 0 \\
      };
      \begin{scope}[on background layer]
        \def\highlightradius{2.2mm}
        \fill[yellow!50] (m-1-2.center) circle (\highlightradius);
        \draw[red!80!black, thick] (m-1-2.center) circle (\highlightradius);
        \foreach \pos in {2-1, 3-1, 2-2, 1-3} {
          \fill[yellow!50] (m-\pos.center) circle (\highlightradius);
        }
      \end{scope}
    \end{tikzpicture} 
    & 
    \begin{tikzpicture}[baseline=(m.center), >=stealth]
      \matrix [
        matrix of math nodes,
        left delimiter={[},
        right delimiter={]},
        nodes={anchor=center, inner sep=1pt, minimum size=0pt, font=\small},
        column sep=0.5em,
        row sep=0.3em
      ] (m) {
        a & d & f & 1 \\
        b & e & 1 & 0 \\
        c & 1 & 0 & 0 \\
        1 & 0 & 0 & 0 \\
      };
      \begin{scope}[on background layer]
        \def\highlightradius{2.2mm}
        \fill[yellow!50] (m-1-3.center) circle (\highlightradius);
        \draw[red!80!black, thick] (m-1-3.center) circle (\highlightradius);
        \foreach \pos in {2-2} {
          \fill[yellow!50] (m-\pos.center) circle (\highlightradius);
        }
      \end{scope}
    \end{tikzpicture}
    & 
    \begin{tikzpicture}[baseline=(m.center), >=stealth]
      \matrix [
        matrix of math nodes,
        left delimiter={[},
        right delimiter={]},
        nodes={anchor=center, inner sep=1pt, minimum size=0pt, font=\small},
        column sep=0.5em,
        row sep=0.3em
      ] (m) {
        a & d & f & 1 \\
        b & e & 1 & 0 \\
        c & 1 & 0 & 0 \\
        1 & 0 & 0 & 0 \\
      };
      \begin{scope}[on background layer]
        \def\highlightradius{2.2mm}
        \fill[yellow!50] (m-2-2.center) circle (\highlightradius);
        \draw[red!80!black, thick] (m-2-2.center) circle (\highlightradius);
        \foreach \pos in {3-1} {
          \fill[yellow!50] (m-\pos.center) circle (\highlightradius);
        }
      \end{scope}
    \end{tikzpicture} \\
    \vspace{0.5mm} & \vspace{0.5mm} & \vspace{0.5mm} \\
    $f_{(1,4)} = b-d-(c-e)f$ & $f_{(2,4)} = e-f$ & $f_{(1,3)} = c-e$
  \end{tabular}
  \caption{Schubert variables occurring in equations $f_{(k,m)}$ highlighted in yellow; circled in red is the Schubert variable $a^{(k,m)}$ associated with $f_{(k,m)}$.}
  \label{fig: hilighted variables in schubert cells for all equations}
\end{figure}
From these diagrams, we observe that the variable $a^{(1,4)}$ associated with $f_{(1,4)}$ does not appear in either $f_{(1,3)}$ or $f_{(2,4)}$. Similarly, the variable $a^{(2,4)} $ associated with $f_{(2,4)}$ does not appear in $f_{(1,3)}.$ It follows that the order $f_{(1,4)} \prec f_{(2,4)} \prec f_{(1,3)}$ satisfies Properties (1) and (2) of Lemma \ref{lem: reduction of affineness to total order}.
\end{ex}

We make two key observations about this example. First, the associated Schubert variables are taken to be the second term that appears in the expansion of $f_{(k,m)}$ given by the recursive formula. Second, the lower row-index the associated Schubert variable has, the more preceding the corresponding equation should be in the total order.

Generalizing the observations, we give a general constructive procedure to associate a Schubert variable $a^{(k,m)}$ with each equation $f_{(k,m)}$ for any Jordan type and any Hessenberg function so that Properties (1) and (2) hold. To do this, we first note the following property of a strictly highest form operator: if the image of a column vector of the canonical Schubert matrix under $X$ is nonzero, then the image vector has an entry with value 1. In Example \ref{ex: strict total order of f_(k,m)}, this property is shown in bold.

\
\begin{lemma}
    \label{lem: pivot value 1}
    Let $X$  be a nilpotent operator in strictly highest form and $V$ the canonical Schubert matrix. Then every nonzero column of $XV$ has pivot value 1.
\end{lemma}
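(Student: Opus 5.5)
Fix a column $v_k$ of the canonical Schubert matrix $V$. By definition of $C_w$, $v_k$ has a $1$ in row $w(k)$, zeros in all rows below $w(k)$, and arbitrary entries (Schubert variables or zeros) in rows above $w(k)$. So $v_k = e_{w(k)} + \sum_{i<w(k)} a_{i,k} e_i$. Since $X$ is in strictly highest form, $Xe_i = e_{\phi(i)}$ when $\phi(i)>0$ and $Xe_i = 0$ otherwise. Linearity then gives
\[
Xv_k = X e_{w(k)} + \sum_{i<w(k)} a_{i,k}\, X e_i .
\]
The plan is to locate the lowest nonzero row of this vector, which is its pivot, and check that the entry there is exactly $1$ rather than a Schubert variable.

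The key step is the monotonicity of $\phi$. The strictly highest form condition says $\phi$ is non-decreasing, and strictly increasing on indices where it is positive. There are two cases.

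\emph{Case $\phi(w(k))>0$.} For every $i<w(k)$ with $\phi(i)>0$ we get $\phi(i)<\phi(w(k))$. Hence every term $a_{i,k}Xe_i$ lies in rows strictly above row $\phi(w(k))$. Row $\phi(w(k))$ of $Xv_k$ therefore receives only the contribution from $Xe_{w(k)} = e_{\phi(w(k))}$, so its entry is $1$, and all rows below it are zero. The pivot of $Xv_k$ is thus at row $\phi(w(k))$ with value $1$. This is exactly the bold entry in Example \ref{ex: strict total order of f_(k,m)}.

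\emph{Case $\phi(w(k))=0$.} Since $\phi$ is non-decreasing and $\phi$ takes values in $\{0,1,\dots,n\}$, every $i<w(k)$ also has $\phi(i)=0$. So each $Xe_i$ vanishes and $Xv_k=0$. This zero column is excluded by the hypothesis of the statement.

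The only point needing care is strictness. Under mere highest form, two columns could share a pivot row, as in the last matrix of Example \ref{ex: highest form of (2,2)}. Then the pivot entry could become $1 + a_{i,k}$, which is not identically $1$. Strictness of $\phi$ once positive rules this out, and this is precisely where the stronger notion is used. I would also note explicitly that the $0/1$ entry condition, together with each nonzero column of $X$ having a single $1$ at row $\phi(i)$, is what justifies writing $Xe_i=e_{\phi(i)}$.
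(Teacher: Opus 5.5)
Your proof is correct and follows the same route as the paper's proof. Your case split $\phi(w(k))=0$ versus $\phi(w(k))>0$ is exactly the paper's split $w(k)\le d$ versus $w(k)>d$ with $d=\dim(\ker X)$, and in the second case the pivot moves up to row $\phi(w(k))$ with value $1$. Your version is more complete, since you use strict monotonicity of $\phi$ on positive values to show explicitly that every term $a_{i,k}Xe_i$ with $i<w(k)$ lands strictly above row $\phi(w(k))$, a point the paper's proof only asserts.
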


\begin{proof}
    Denote $d = \dim(\ker X)$. Then by construction of the strictly highest form, $e_1,\cdots, e_d \in \ker X$. Take the $k$th column $v_k$ of $V$. Note that because $V$ is in the canonical form of the Schubert cell, the pivot of $v_k$ has value $1$ for all $i$. We show that if $Xv_k \neq 0$, then the pivot value remains $1$. If the pivot index $p$ of $v_k$ is at most $d$, then $v_k$ is a linear combination of $e_1,\cdots, e_d$, so $Xv_k = 0$. Hence, suppose otherwise; if the pivot index $p$ of $v_k$ is more than $d$, then the pivot of $v_k$ is shifted above after the action of $X$ because $X$ is in strictly highest form. In particular, the pivot of $Xv_k$ is still 1.
\end{proof}


This property, along with the additional property that $f_{(k,m')} = 1$ for some $m' < m$, guarantees the existence of a canonical choice for the associated 
Schubert variable with Property (1).  
 
When $f_{(k,m')} = 1$ for some $m' < m$, this property above allows us to decompose $f_{(k,m)}$ as  $-a^{(k,m)} + g_{(k,m)}$, where $g_{(k,m)}$ is independent of $a^{(k,m)}$.


\begin{prop}
    \label{prop: f(k,m) has a linear term}
    Suppose that the nilpotent operator $X$ is in strictly highest form and $V$ is the canonical matrix form of the Schubert cell associated with $w \in S_n$.  Let $S_{MT}$ be the set of nonzero equations $f_{(k,m)}$ cutting out $C_w \cap \mathcal{H}(X,h)$ obtained by recursively applying Theorem \ref{thm: Thm 1} to a fixed Hessenberg path from $h$ to $h^{\operatorname{top}}$. Then Property (1) of Lemma \ref{lem: reduction of affineness to total order}  holds for $S=S_{MT}$.

    More explicitly, let  $f_{(k,m)}$ be the equation obtained by the recursive equation
    \begin{align*}
    f_{(k,m)}  =  \Big[Xv_k - \sum_{w(j)> w(m)}  f_{(k,j)} v_{j}\Big]_{w(m)}
    \end{align*}
    given in Theorem \ref{thm: Thm 1}. Assume $f_{(k,m)} \neq 0 ,1$.
    If $f_{(k,m')} = 1$ for some $m' < m$ (or equivalently, $w^{-1}(\phi(w(k))) < m$, where $\phi$ is given by (Equation \ref{eq: definition of phi})), then the Schubert variable $a^{(k,m)} := a_{w(m), w^{-1}(\phi(w(k)))}$  has the property that $f_{(k,m)}$ decomposes as  $-a^{(k,m)} + g_{(k,m)}$, where $g_{(k,m)}$ is a polynomial independent of $a^{(k,m)}$ and has no constant terms. 
\end{prop}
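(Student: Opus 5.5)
We unwind the recursion \eqref{eq: thm 1 recursive equation} and track where the variable $a_{w(m),m'}$ can enter, writing $m' := w^{-1}(\phi(w(k)))$, $p:=\phi(w(k))$ and $r:=w(m)$. Since $f_{(k,m)}\neq 0$, Remark \ref{rmk: assume phi is increasing} gives $p>0$. By Lemma \ref{lem: pivot value 1}, $Xv_k$ has pivot $1$ in row $p$. The recursion proceeds by decreasing pivot rows, so $f_{(k,j)}=0$ whenever $w(j)>p$, and $f_{(k,m')}=[Xv_k]_{p}=1$. This also proves the stated equivalence: $f_{(k,m')}=1$ for some $m'<m$ holds exactly when $w^{-1}(\phi(w(k)))<m$. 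Thus
\[
f_{(k,m)}=\Big[Xv_k - v_{m'} - \sum_{r<w(j)<p} f_{(k,j)}v_j\Big]_{r}.
\]
The entry $[v_{m'}]_r$ is a Schubert variable because $r<p$ and $m'<m$, so $(r,m')$ lies above the pivot of column $m'$ and to the left of the pivot of row $r$. This entry is exactly $a^{(k,m)}=a_{w(m),m'}$, and it contributes $-a^{(k,m)}$. It remains to show that no other term contains $a_{r,m'}$ and that there is no constant term.

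We check the other terms in turn. \emph{(i)} The entry $[Xv_k]_r = [v_k]_{\phi^{-1}(r)}$ (or $0$) involves only column $k$, and $k\neq m'$ since $w(k)\neq p$ (nilpotency gives $\phi(i)<i$). \emph{(ii)} For $j$ with $r<w(j)<p$, the entry $[v_j]_r$ lies in column $j\neq m'$, so it does not equal $a_{r,m'}$. \emph{(iii)} The key step, and the main obstacle, is to show that each coefficient $f_{(k,j)}$ with $r<w(j)<p$ is independent of $a_{r,m'}$. For this I would prove by downward induction on $w(j)$ the auxiliary claim that $f_{(k,j)}$ involves only Schubert variables in rows $\ge w(j)$. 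In the inductive step, $[Xv_k]_{w(j)}$ is a column-$k$ entry, $[v_{m'}]_{w(j)}$ sits in row $w(j)>r$, and each remaining term $f_{(k,j')}[v_{j'}]_{w(j)}$ has $w(j')>w(j)$, so by induction it uses rows $\ge w(j)$. Since $r<w(j)$, row $r$ never appears, and independence follows. Degenerate cases where entries are $0$ or $1$ rather than variables fit the same claim trivially.

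\emph{No constant term:} every entry $[v_i]_r$ with $w(i)\neq r$ is either $0$ or a Schubert variable. The only constant entries in row $r$ are the pivot $[v_m]_r=1$, and $v_m$ never appears in the sum since $w(j)>r$. Also $[Xv_k]_r$ is an entry of $v_k$ in row $\phi^{-1}(r)\neq w(k)$ (because $r\neq p$), hence is not the pivot. Every monomial is therefore a product of some $f_{(k,j)}$ with a nonconstant entry, or is a single variable. So $g_{(k,m)}$ has no constant term, and $f_{(k,m)}=-a^{(k,m)}+g_{(k,m)}$ as claimed.
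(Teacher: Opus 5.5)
Your proof is correct and follows the same route as the paper. Like the paper, you unwind the recursion to isolate the term $-[v^{(p)}]_r=-a_{w(m),m'}$, note that $[Xv_k]_r$ is a column-$k$ entry with $k\neq m'$, and check that the summation terms neither contain $a_{w(m),m'}$ nor contribute a constant. Your downward induction, that $f_{(k,j)}$ involves only variables in rows $\ge w(j)$, just makes explicit the step the paper dispatches with ``by the recursive relation of $c^{(q)}$''; the claim holds as stated because $[Xv_k]_{w(j)}=[v_k]_{\phi^{-1}(w(j))}$ sits in row $\phi^{-1}(w(j))>w(j)$.
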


We call $a^{(k,m)}$ the \emph{Schubert variable associated with} $f_{(k,m)}$.

\begin{proof}
    We reframe the problem statement as follows. Recall the proof idea of  Theorem \ref{thm: Thm 1}, which is to expand $Xv_k$ in terms of the basis vectors in the order of pivot positions, $\{v^{(n)},\cdots, v^{(1)}\}$, where $v^{(q)} = v_{w^{-1}(q)}$. Denote  the pivot index of $Xv_k$ by $p = \phi(w(k))$ and also set $r = w(m)$.The vector $Xv_k$ is expanded uniquely in terms of the basis $\{v^{(q)}\}_{q=1}^{n}$ as
    $$Xv_k = c^{(p)}v^{(p)} + \cdots +c^{(1)}v^{(1)},$$
    where $f_{(k,m)} = c^{(r)}$.

    By Remark \ref{rmk: assume phi is increasing}, we may work with variables $p, r$ in place of $k,m$.
    Because $c^{(r)}$ is assumed to be nonzero, we have $p > r > 0$. Moreover, by Lemma \ref{lem: pivot  value 1}, the condition $f_{(k,m')} = 1$ for $m' < m$ is equivalent to $w^{-1}(p) < w^{-1}(r)$  (i.e., $w^{-1}(\phi(w(k)) < m$ as given in the statement).  Moreover, $$a^{(k,m)} := a_{w(m),w^{-1}(\phi(w(k)))} = [v^{(p)}]_r.$$ Therefore, we wish to show that if $r < p$ and $w^{-1}(p) < w^{-1}(r)$, then $c^{(r)}$ contains a term of the form $-[v^{(p)}]_r$, and the variable $a_{r, w^{-1}(p)}$ does not appear in other terms of $c^{(r)}$.

    By the derivation of the formula, we have
    $$c^{(r)} =    [Xv_k]_r - [v^{(p)}]_r - \sum_{p > q > r}  c^{(q)} [v^{(q)}]_{r} \text{ for }r < p,$$
    where both $c^{(q)}$ and $[v_{(q)}]_p$ have no constant terms for $r < q < p$.
    Then we have a decomposition
    $$f_{(k,m)} = -a^{(k,m)} + g_{(k,m)}, \quad \text{ where } g_{(k,m)} = [Xv_k]_r - \sum_{p > q > r} c^{(q)}[v^{(q)}]_r.$$
    The last summation term in $g_{(k,m)}$, if nonzero, must be at least quadratic in the Schubert variables.  Moreover, by the recursive relation of $c^{(q)}$, the variable $[v^{(p)}]_r$ cannot appear in the expansion of  $c^{(q)}[v^{(q)}]_r$ for all $q > r$. Therefore, to show the claim, it remains to show that $[Xv_k]_r $ is independent of the variable $a^{(k,m)}$ and has no constant term.  These claims follow from the observation that $X$ is in the highest form and $v_k = v^{(w(k))}\neq v^{(\phi(w(k)))} = v^{(p)}$, so $[Xv_k]_r$ is either zero or a Schubert variable that is not $a^{(k,m)}$.
\end{proof}

\begin{remark}
    Observe that when $f_{(k,m)} \in S_{MT}$ and $f_{(k,m')} = 1$ for some $m' \geq m$, then $S_{MT}$ would contain the equation $f_{(k,m')}$, so $C_w \cap \cH(X,h)$ would be empty. 
    Thus, the hypothesis imposed in the Proposition that $f_{(k,m')} = 1$ for $m' < m$ will not interfere with the later discussion when we impose vanishing of $f_{(k,m)}$ to eliminate the Schubert variable $a^{(k,m)}$ in the coordinate ring of $C_w \cap \cH(X,h)$. 
\end{remark}

\begin{remark}
    The hypothesis that $X$ is in strictly highest form is crucial to the validity of Proposition \ref{prop: f(k,m) has a linear term}. For a counterexample when this hypothesis is dropped, consider the Jordan canonical form with Jordan type $\lambda = (1,2,1)$ and  the Schubert cell associated with $w = [4321]$. The corresponding matrices are given by 
    $$X = \begin{bmatrix}
        0 & 0 & 0 & 0 \\
        0 & 0 & 1 & 0 \\
        0 & 0 & 0 & 0 \\
        0 & 0 & 0 & 0 
    \end{bmatrix}, \quad  V = 
    \begin{bmatrix}
        a & d & f & 1 \\
        b & e & 1 & 0 \\
        c & 1 & 0 & 0 \\
        1 & 0 & 0 & 0 
    \end{bmatrix}, \quad \text{so that }X V = \begin{bmatrix}
        0 & 0 & 0 & 0 \\
        \textbf{c} & \textbf{1} & 0 & 0 \\
        0 & 0 & 0 & 0 \\
        0 & 0 & 0 & 0 
    \end{bmatrix}.$$ 
    Observe that $Xv_1$ has pivot value $c$, not $1$.
    It follows that $Xv_1$ is expanded in the basis $\{v_i\}_{i=1}^4$ as $Xv_1 = cv_3 - cf v_4$, which gives $f_{(1,4)} = cf$. It follows that the intersection $C_w \cap \cH(X, \hleft 1244 \hright)$ is the closed subscheme of $C_w$ cut out by the equation $cf$, which is not affine. 
\end{remark}

Next, we show that there exists a strict total ordering  on $S_{MT}$ satisfying Property (2) in Lemma \ref{lem: reduction of affineness to total order}.

\begin{prop}
    \label{prop: existence of total order}
    Assume that the nilpotent operator $X$ is in its strictly highest form.
    Suppose that  $C_w \cap \mathcal{H}(X,h)$ is nonempty. Let $S_{MT}$ be the set of nonzero equations $f_{(k,m)}$ cutting out $C_w \cap \mathcal{H}(X,h)$ obtained by recursively applying Theorem \ref{thm: Thm 1} to a fixed Hessenberg path from $h$ to $h^{\operatorname{top}}$.
    Then property (2) of Lemma \ref{lem: reduction of affineness to total order} holds on $S=S_{MT}$.
    
    More precisely, there is a strict total ordering $\prec$ on $S_{MT}$ such that if $f_{(k,m)} \prec f_{(k', m')}$, then the Schubert variable $a^{(k,m)}$ associated with $f_{(k,m)}$ does not appear in $f_{(k',m')}.$ One such strict total ordering is given by
    \begin{align*}
        f_{(k,m)} \prec f_{(k',m')} \overset{\text{def}}{\iff} \begin{cases}
        w(m) < w(m'),  \text{ or }\\
        m = m' \text{ and }w(k) > w(k').
        \end{cases}
    \end{align*}
    
\end{prop}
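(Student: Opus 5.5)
We work throughout in the $(p,r)$-coordinates of Remark \ref{rmk: assume phi is increasing}. For $f_{(k,m)}\in S_{MT}$ we set $p=\phi(w(k))$ and $r=w(m)$. By Proposition \ref{prop: f(k,m) has a linear term} the associated variable is $a^{(k,m)}=[v^{(p)}]_r$, the entry in row $r$ of the column whose pivot lies in row $p$. Nonemptiness of $C_w\cap\cH(X,h)$, together with the remark following Proposition \ref{prop: f(k,m) has a linear term}, guarantees that every element of $S_{MT}$ satisfies $r<p$ and $w^{-1}(p)<w^{-1}(r)$, so this associated variable is defined.

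Next we rewrite the order $\prec$. The first criterion compares $r=w(m)$ with $r'=w(m')$. For ties with $m=m'$, note that $w(k)>w(k')$ is equivalent to $p>p'$, because $\phi$ is strictly increasing on the positive part. Property (2) then reduces to the following claim: if $f_{(k',m')}$ has pivot data $(p',r')$ with either $r'>r$, or $r'=r$ and $p'<p$, then the variable $[v^{(p)}]_r$ does not occur in $c'^{(r')}$. Here $c'^{(\cdot)}$ denotes the coefficients in the expansion of $Xv_{k'}$.

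To prove the claim we establish, by downward induction on $s$, the following support statement. The coefficient $c'^{(s)}$ in the expansion $Xv_{k'}=\sum_{s\le p'} c'^{(s)}v^{(s)}$ involves only Schubert variables $[v^{(q)}]_t$ with $t\ge s$ and $q\le p'$, together with entries of $v_{k'}$ itself.

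\begin{itemize}
\item The base terms come from the recursion $c'^{(s)}=[Xv_{k'}]_s-\sum_{q>s}c'^{(q)}[v^{(q)}]_s$. The term $[Xv_{k'}]_s$ equals $[v_{k'}]_{\phi^{-1}(s)}$, an entry of the column with pivot $w(k')$ in a row $\phi^{-1}(s)>s$.
\item Each product $c'^{(q)}[v^{(q)}]_s$ has $q>s$ and $q\le p'$. The coefficient $c'^{(q)}$ only involves rows at least $q>s$ by induction, and $[v^{(q)}]_s$ lies in row $s$.
\end{itemize}

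Given the support statement, the case $r'>r$ follows because $c'^{(r')}$ only involves entries in rows at least $r'>r$, apart from entries of $v_{k'}$. These exceptional entries $[v_{k'}]_{\phi^{-1}(s)}$ occur in rows $\phi^{-1}(s)>s\ge r'>r$, so they are not in row $r$ either. The case $r'=r$ and $p'<p$ follows because only columns $v^{(q)}$ with $q\le p'<p$ enter, apart from the column $v_{k'}=v^{(w(k'))}$. That column is accessed only in rows $\phi^{-1}(s)>s\ge r$, so it never supplies the row-$r$ entry $[v^{(p)}]_r$.

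The main obstacle is the bookkeeping for these exceptional $[Xv_{k'}]$ entries. The danger is that the column $v_{k'}$ could coincide with $v^{(p)}$, that is $w(k')=p$. In that case the entry $[v^{(p)}]_{\phi^{-1}(s)}$ sits in a row strictly greater than $s\ge r'\ge r$, so it still avoids row $r$. I would verify this row-index shift carefully, and also verify that the zero pattern of the canonical form (zeros below pivots) keeps every index in range, since this is exactly where strict increase of $\phi$ is used.
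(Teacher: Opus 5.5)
Your proof is correct and is essentially the paper's argument. Both bound which canonical-matrix entries can enter $c^{(r')}$ through the recursion of Theorem~\ref{thm: Thm 1}, then exclude $[v^{(p)}]_r$ by a row argument when $r<r'$, and by a column argument plus the shift $\phi^{-1}(s)>s$ when $r=r'$, $p>p'$; these are the paper's Conditions A and B. Your downward-induction support statement is in fact tighter than the paper's enumeration, which lists only the row-$r'$ entries $[v^{(q)}]_{r'}$ and misses entries in rows strictly between $r'$ and $p'$ that come from the nested coefficients $c^{(q)}$ (e.g.\ $e=[v^{(3)}]_2$ in $f_{(1,4)}=b-d-(c-e)f$ for $w=[4321]$), although this does not affect the conclusion.
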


\begin{proof}

    We prove a stronger version of the claim: the relation $\prec$ given in the statement above is a strict total order on the set $S_{MT}^{\operatorname{tot}}$ of all nonzero and non-unit equations $f_{(k,m)}$ obtained by the Main Theorem. If this is true, then for any subset $S$ of $S_{MT}^{\operatorname{tot}}$, $\prec$ remains a strict total order on $S$, and the claim follows.
    
    By the nonzero assumption on $f_{(k,m)}$ and Remark \ref{rmk: assume phi is increasing}, we may work with variables $(r,p) = (w(m), \phi(w(k))$ and similarly $(r',p') = (w(m'), \phi(w(k')))$. Moreover, use the notation $v^{(q)} = v_{w^{-1}(q)}$; then the Schubert variable associated with $f_{(k,m)} = c^{(r)}$ is $a^{(k,m) }= [v^{(p)}]_r$. Then the goal is to show that 
    \begin{align*}
        \begin{cases}
            r < r', \qquad  & (\text{Case } 1) \quad \text{ or } \\
            r = r' \text{ and } \phi^{-1}(p) > \phi^{-1}(p') & (\text{Case } 2)
        \end{cases}
    \end{align*}
    implies that the variable $[v^{(p)}]_r$ does not appear in the expansion of $c^{(r')}$. 
    From the recursive formula 
    $$f_{(k',m')} = c^{(r')} =[Xv_{k'}]_{r'} - [v^{(p')}]_{r'} - \sum_{p' > q > r'}  c^{(q)} [v^{(q)}]_{r'} \text{ for }r' \leq  p', $$
    the variable $[v^{(p)}]_r$ occurs in the expansion of $f_{(k',m')} $    
    only if it occurs in the form of $[Xv_{k'}]_q = [v^{(\phi^{-1}(p'))}]_{\phi^{-1}(q)}$ when $q \in \operatorname{im}(\phi)$ and $p' > q \geq r'$, or $[v^{(q)}]_{r'} $ for $p' \geq q > r'$. Because the Schubert variables are determined by their positions in the Schubert canonical form, the variable $[v^{(p)}]_r$ occurs in the expansion of $f_{(k',m')}$ only if
    \begin{align*}
        (p,r) &= \begin{cases}
            (\phi^{-1}(p'), \phi^{-1}(q)) &\text{ for some } r' \leq  q < p' \text{ with }q \in \operatorname{im}(\phi), \text{ or }\\
            (q, r') &\text{ for some } r' < q \leq p'
        \end{cases} \\
        &\iff  \begin{cases}
            r' \leq \phi(r) < p' \text{ and } \phi(p) = p',&\text{(Condition A)}\quad \text{ or }  \\
            r=r' \text{ and }  r < p \leq p'&\text{(Condition B)}
        \end{cases}
    \end{align*}
    We show that neither of these conditions happen in both Case 1 and Case 2.
    In Case 1 ($r < r'$), from $\phi(r) < r < r'$, Condition A is not satisfied. Also, $r = r'$ in Condition B cannot be satisfied. Next, assume Case 2 ($r = r'$ and $\phi^{-1}(p) > \phi^{-1}(p')$): because $\phi(r) = \phi(r') < r'$,  Condition A cannot be satisfied. Also, by the property of $\phi$ associated to the strictly highest form, $\phi^{-1}(p) > \phi^{-1}(p')$ implies $p > p'$; hence Condition B cannot be satisfied.
    Therefore, it follows that when Case 1 or Case 2 holds, $[v^{(p)}]_r$ cannot occur in the expansion of $f_{(k',m')}$, i.e., $f_{(k,m)} \prec f_{(k',m')}$.

    Finally, because the given relation on $f_{(k,m)}$
    is lexicographical order on the pairs of values $(r,\phi^{-1}(p)) = (w(m), w(k))$, where the order is increasing in $r$ and decreasing in $\phi^{-1}(p)$, it is a strict total order on $S^{\operatorname{tot}}_{MT}$.
\end{proof}

Altogether,  affineness of the intersection $C_w \cap \cH(X,h)$ follows.
\begin{theorem}
    \label{thm: rederivation of affineness}
    Assume that the nilpotent operator $X$ is in strictly highest form.
    For each $w$ and $h$, $C_w \cap \mathcal{H}(X,h)$ is either an affine space or empty. 
\end{theorem}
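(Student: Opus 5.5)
The plan is to combine Lemma \ref{lem: reduction of affineness to total order} with Propositions \ref{prop: f(k,m) has a linear term} and \ref{prop: existence of total order}. Fix $w$ and $h$, and fix a Hessenberg path $h = h_0 \leq h_1 \leq \cdots \leq h_r = h^{\operatorname{top}}$. By the Corollary to Theorem \ref{thm: Thm 1}, $C_w \cap \cH(X,h)$ is cut out in $C_w$ by the equations $f_{(k,m)}$ attached to the edges of this path. Equations with $f_{(k,m)} = 0$ impose no condition, so we discard them and let $S_{MT}$ be the remaining nonzero ones. If $C_w \cap \cH(X,h)$ is empty there is nothing to prove. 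We therefore assume it is nonempty; then no element of $S_{MT}$ is a nonzero constant. By Lemma \ref{lem: pivot value 1}, the only constants produced by the recursion are $0$ and $1$, since the leading coefficient $c^{(p)}$ equals $1$.

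Next we check the hypotheses of Proposition \ref{prop: f(k,m) has a linear term} for each $f_{(k,m)} \in S_{MT}$. We need $f_{(k,m')} = 1$ for some $m' < m$, that is, $w^{-1}(\phi(w(k))) < m$. By Lemma \ref{lem: pivot value 1}, $Xv_k$ (which is nonzero because $f_{(k,m)} \neq 0$) has pivot $1$ in row $p = \phi(w(k))$. Hence $f_{(k,w^{-1}(p))} = 1$, and $c^{(q)} = 0$ for $q > p$. Since $f_{(k,m)} \neq 0$, we get $r = w(m) \leq p$, and $r < p$ because $f_{(k,m)}$ is not a unit. Let $m' = w^{-1}(p)$. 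If $m' \geq m$, the flags in $\cH(X,h) \subset \cH(X,h_{j})$ satisfy $Xv_k \in V_{h(k)}$ with $h(k) \leq m-1 < m'$, which forces $c_{m'} = 0$, i.e.\ $1 = 0$. This contradicts nonemptiness, exactly as in the Remark following Proposition \ref{prop: f(k,m) has a linear term}. More precisely, the edge at index $k$ passing from value $m'$ to $m'-1$ must occur along the path, since $h(k) < m \leq m'$ while $h^{\operatorname{top}}(k) = n \geq m'$. So $m' < m$, and Proposition \ref{prop: f(k,m) has a linear term} supplies Property (1): $f_{(k,m)} = -a^{(k,m)} + g_{(k,m)}$ with $a^{(k,m)} = a_{w(m),w^{-1}(\phi(w(k)))}$ and $g_{(k,m)}$ independent of it.

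Proposition \ref{prop: existence of total order}, applied to the same $S_{MT}$, gives the strict total order $\prec$ and hence Property (2). Lemma \ref{lem: reduction of affineness to total order} then shows that $C_w \cap \cH(X,h)$ is an affine space of codimension $|S_{MT}|$ in $C_w$, which proves the theorem.

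The main obstacle is making sure that the associated variables $a^{(k,m)}$ are distinct across $S_{MT}$, and that the set $S$ used in Lemma \ref{lem: reduction of affineness to total order} consists exactly of the non-unit nonzero equations. Distinctness follows from injectivity of $(k,m) \mapsto (w(m), w^{-1}(\phi(w(k))))$, which uses that $\phi$ is injective on its positive values, i.e.\ the strictly highest form hypothesis. The other point is the careful justification that nonemptiness excludes every unit equation, and this is the step I would write out in full detail.
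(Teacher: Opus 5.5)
Your proposal is correct and follows the paper's proof: assume nonemptiness, get Property (1) from Proposition \ref{prop: f(k,m) has a linear term} and Property (2) from Proposition \ref{prop: existence of total order}, and conclude with Lemma \ref{lem: reduction of affineness to total order}. Your extra check that nonemptiness forces $w^{-1}(\phi(w(k))) < m$, the hypothesis of Proposition \ref{prop: f(k,m) has a linear term}, is what the paper leaves to the remark after that proposition, and the distinctness of the $a^{(k,m)}$ that you flag as the main obstacle is automatic: if $a^{(\alpha)}=a^{(\beta)}$ with $f_\alpha \prec f_\beta$, that variable would appear in $f_\beta$, violating Property (2).
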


\begin{proof}
    Assume $C_w \cap \mathcal{H}(X,h) $ is nonempty. By Lemma \ref{lem: reduction of affineness to total order}, it suffices to show Properties (1) and (2) hold. Property (1) holds by Proposition \ref{prop: f(k,m) has a linear term},  and Property (2) holds by Proposition \ref{prop: existence of total order}. 
\end{proof}

\subsection{Dimension of the intersection $C_w \cap \cH(X,h)$}
\label{sec:dimstrata}
Next, using the Main Theorem, we compute the codimension of the nonempty intersection $C_w \cap \cH(X,h)$ in $C_w$ to prove Theorem \ref{thm: Tymoczko's dw}.  By Lemma \ref{lem: reduction of affineness to total order}, it suffices to compute the cardinality $|S_{MT}|$, where
$$S_{MT} = \{f_{(k,m)} \neq 0  \ | \  h(k) < m \leq n ,\  k = 1,\cdots, n\}. $$
To compute this number, we identify the conditions for which $f_{(k,m)}$ with $h(k) < m$ for all $k$ is a nonzero equation.

\begin{lemma}
    Assume that the nilpotent operator $X$ is in highest form and $C_w \cap \cH(X,h)$ is nonempty. 
    Then $f_{(k,m)}$ is a nonzero equation if and only if $w(m) < \phi(w(k))$. 
\end{lemma}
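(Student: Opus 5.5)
Throughout, fix $k$ and $m$ with $h(k) < m \le n$. Following the proofs of Theorem \ref{thm: Thm 1} and Proposition \ref{prop: f(k,m) has a linear term}, write $p = \phi(w(k))$ and $r = w(m)$, and expand
$$Xv_k = \sum_q c^{(q)} v^{(q)}, \qquad v^{(q)} = v_{w^{-1}(q)},$$
so that $f_{(k,m)} = c^{(r)}$. The plan is to split into the cases $r > p$, $r = p$ and $r < p$ according to where $r$ sits relative to the pivot row $p$ of $Xv_k$. I will work with $X$ in strictly highest form, as in the rest of the section, so that Lemma \ref{lem: pivot value 1} is available.

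\emph{The cases $p = 0$ and $r \ge p > 0$.} First suppose $p = 0$, i.e. $w(k) \le d = \dim\ker X$. The nonzero entries of $v_k$ lie in rows $\le w(k) \le d$, and $e_1,\dots,e_d \in \ker X$, so $Xv_k = 0$. Hence every $f_{(k,m)}$ vanishes, consistent with $w(m) < 0$ being impossible.

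Now suppose $p > 0$. By Lemma \ref{lem: pivot value 1}, $Xv_k$ has its lowest nonzero entry in row $p$, and that entry equals $1$. The Gaussian elimination in the proof of Theorem \ref{thm: Thm 1} then gives $c^{(q)} = 0$ for all $q > p$ and $c^{(p)} = 1$. So $r > p$ forces $f_{(k,m)} = 0$.

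If $r = p$, then $f_{(k,m)} = 1$. Since $m > h(k)$, this unit equation belongs to the set cutting out $C_w \cap \cH(X,h)$, which would make the intersection empty, contradicting the hypothesis. So $r = p$ cannot occur. This proves the forward implication: if $f_{(k,m)} \neq 0$, then $w(m) < \phi(w(k))$.

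\emph{The case $r < p$.} The same nonemptiness argument gives more: the coefficient $f_{(k, w^{-1}(p))} = 1$ must not be imposed, so $w^{-1}(p) \le h(k) < m$. I then rerun the recursion from the proof of Proposition \ref{prop: f(k,m) has a linear term}:
$$c^{(r)} = [Xv_k]_r - [v^{(p)}]_r - \sum_{p > q > r} c^{(q)} [v^{(q)}]_r .$$
Here I would redo that argument directly rather than cite it, since the Proposition assumes $f_{(k,m)} \neq 0,1$ a priori. The argument shows that the middle term $-[v^{(p)}]_r$ cannot cancel against anything else. The sum is at least quadratic in Schubert variables and does not contain $[v^{(p)}]_r$. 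Also, $[Xv_k]_r$ is either $0$ or a single Schubert variable from the column $v_k \neq v^{(p)}$.

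\emph{Main obstacle.} The step needing the most care is checking that $[v^{(p)}]_r = a_{r,\,w^{-1}(p)}$ is a genuine free Schubert variable and not a forced $0$ of the canonical form. In the canonical matrix, the entry in row $r$ and column $w^{-1}(p)$ is free exactly when it lies above the pivot of its column and to the left of the pivot in its row. The first condition is $r < p$. The second is $w^{-1}(p) < w^{-1}(r) = m$, which is exactly the inequality obtained from nonemptiness above. Hence $f_{(k,m)} = -a_{r,w^{-1}(p)} + g$ with $g$ independent of that variable, so $f_{(k,m)}$ is a nonzero polynomial. This gives the reverse implication and completes the proof.
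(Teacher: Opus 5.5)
Your proposal is correct and follows the paper's proof essentially step for step. The case $r>p$ gives $0$; the case $r=p$ gives a unit, which nonemptiness excludes; and in the case $r<p$, nonemptiness forces $w^{-1}(p)<w^{-1}(r)$, so $-[v^{(p)}]_r$ is a genuine Schubert variable that cannot cancel against $[Xv_k]_r$ or the quadratic sum. Your extra care fills in points the paper leaves implicit: you treat $p=0$ separately, you make $m>h(k)$ explicit, and you assume strictly highest form, which the argument genuinely needs since Lemma \ref{lem: pivot value 1} fails for a merely highest-form $X$.
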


\begin{proof} 
    Denote $p =\phi(w(k))$ and $r = w(m)$. First, because $C_w \cap \cH(X,h)$ is nonempty, we must have $w^{-1}(p) < w^{-1}(r)$ and $r \neq p$. Moreover, if $r > p$, then we have $f_{(k,m)} = 0$, so $f_{(k,m)} \neq 0$ implies $r < p$. It remains to show the converse. 
    Observe that the inequality $w^{-1}(p) < w^{-1}(r)$ implies that the $r$-th row pivot of the Schubert cell $V$ must appear in a later column than the $w^{-1}(p)$-th column. Hence we obtain
     $$[v^{(p)}]_{r}  = [v_{w^{-1}(p)}]_{r} = \begin{cases}
        \text{a Schubert variable }& \text{ if } r < p \\ 
        0 & \text{ if } r  > p  
    \end{cases}.$$
    By the recursive formula in Theorem \ref{thm: Thm 1}, $f_{(k,m)}$ can be written as a sum of three terms
    $$f_{(k,m)} = c^{(r)} = \Big[Xv_k - v^{(p)} - \sum_{p > q > r}  c^{(q)} v^{(q)}\Big]_{r}, $$
    and by the same argument as in the proof of Proposition \ref{prop: f(k,m) has a linear term}, the first and the last term of $f_{(k,m)}$ is independent of $[v^{(p)}]_r$, so it does not cancel the second term. It follows that if $r < p$, then $f_{(k,m)} \neq 0$.
\end{proof}

\begin{corollary}
    \label{cor: cardinality of |S|}
    If $C_w \cap \cH(X,h)$ is nonempty, then its codimension in $C_w$ is
    \begin{align*}
        |S_{MT}| &=\left | \left\{(k,m)  \ |\  h(k) < m ,  w(m) < \phi(w(k)) \right\}\right|.  \quad \qed
    \end{align*}
    
\end{corollary}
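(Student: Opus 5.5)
The corollary follows by combining Lemma \ref{lem: reduction of affineness to total order} with the lemma just proved. I first fix a Hessenberg path from $h$ to $h^{\operatorname{top}}$. Along this path, each index $k$ is raised step by step from $h(k)$ to $n$, and the edge that takes the value at index $k$ from $m-1$ to $m$ contributes the equation $f_{(k,m)}$ of Theorem \ref{thm: Thm 1}, for every $h(k)<m\le n$. The recursive formula $(\ast)$ shows that $f_{(k,m)}$ depends only on $X$, $w$, $k$ and $m$, and not on the rest of the path. So the collection of equations is exactly $\{f_{(k,m)} : h(k)<m\le n\}$, and $S_{MT}$ is the subset of nonzero ones.

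Next, since $C_w\cap\cH(X,h)\neq\emptyset$, no $f_{(k,m)}$ in this collection is a nonzero constant. In particular none equals $1$, because a unit equation would cut out the empty set. I also choose $X$ in strictly highest form, which is harmless by Remark \ref{rmk: H(X,h) depend only on conjugacy class} and Lemma \ref{lem: JCF to HF}. Propositions \ref{prop: f(k,m) has a linear term} and \ref{prop: existence of total order} then give Properties (1) and (2) of Lemma \ref{lem: reduction of affineness to total order} for $S=S_{MT}$. That lemma says the codimension of $C_w\cap\cH(X,h)$ in $C_w$ is $|S_{MT}|$, and that the equations in $S_{MT}$ are pairwise distinct, since each eliminates its own variable $a^{(k,m)}$. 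I also need the map $(k,m)\mapsto f_{(k,m)}$ to be injective on nonzero equations, so that counting elements of $S_{MT}$ is the same as counting index pairs. This holds because distinct pairs have distinct associated variables $a^{(k,m)}=a_{w(m),w^{-1}(\phi(w(k)))}$: here $w(m)$ determines $m$, and $\phi$ is injective on its positive values, so $w^{-1}(\phi(w(k)))$ determines $k$.

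Finally, the preceding lemma says that for $h(k)<m$, the equation $f_{(k,m)}$ is nonzero if and only if $w(m)<\phi(w(k))$. Substituting this criterion into the indexing of $S_{MT}$ gives the stated cardinality. The step that needs the most care is the injectivity just described, since it is what justifies equating the count of equations with the count of pairs $(k,m)$. A second point to watch is that "codimension $=|S|$" in Lemma \ref{lem: reduction of affineness to total order} really requires the $|S|$ eliminated variables to be distinct. That distinctness is guaranteed by the total order of Proposition \ref{prop: existence of total order}: each $a^{(\alpha)}$ does not appear in any later equation, so no two equations can share the same associated variable.
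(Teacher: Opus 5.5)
Your proposal is correct and follows the paper's argument: Lemma~\ref{lem: reduction of affineness to total order}, with Propositions~\ref{prop: f(k,m) has a linear term} and~\ref{prop: existence of total order} supplying its hypotheses, gives codimension $|S_{MT}|$, and the preceding lemma turns nonvanishing of $f_{(k,m)}$ for $h(k)<m$ into $w(m)<\phi(w(k))$; your check that $(k,m)\mapsto a^{(k,m)}$ is injective, combined with the order on pairs, makes explicit the count of pairs that the paper leaves implicit. One correction: strictly highest form has to be a standing hypothesis here, since the $\phi$ in the formula is that of $X$ itself, and it cannot be obtained by conjugating $X$, because conjugation does not preserve $C_w\cap\cH(X,h)$ for a fixed $w$.
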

\begin{remark}
    In the description of $|S_{MT}|$ in Corollary \ref{cor: cardinality of |S|}, by construction, the pair $(k,m)$ correspond to the indices of the equation $f_{(k,m)}$. If we use 
    the change of variables associated with $a_{r,s} := a_{w(m), w^{-1}(\phi(w(k)))}$, then one can rewrite $|S_{MT}|$ in terms of the associated Schubert variable indices as
    $$|S_{MT}|= \left|\left \{(r,s) \ | \ h(w^{-1}(\phi^{-1}(w(s)))) < w^{-1}(r), \ r < w(s), \ s < w^{-1}(r)  \right\}\right|,$$
    where the last additional condition arose from the assumption that $a_{r,s}$ is a Schubert variable, so that pivots of the $r$-th row and the $s$-th column must be to the below and right of the matrix, respectively.
\end{remark}

\begin{remark}
    \label{rmk: |S| in (r,s)}
    We contrast this result with the original statement on the dimension $d_w$ given in Theorem \ref{thm: Tymoczko's dw} by Tymoczko. 
Using the definition of $\phi$ associated to the highest form nilpotent operator $X$ and $\dim(C_w) =\left \{(i,k)  |  i < k, w^{-1}(i) > w^{-1}(k)\right\}$, one computes that the codimension computation in Theorem \ref{thm: Tymoczko's dw} is
\begin{align*}
    \dim(C_w) - d_w &= \left|\left\{(i, k) | i < k, w^{-1}(i) > w^{-1}(k),   h(w^{-1}(\phi^{-1}(k))) < w^{-1}(i) \right\}\right |.
\end{align*}
After relabeling indices using $(r,s) = (i, w^{-1}(k))$, we obtain that the codimension computation agrees with that of $|S_{MT}|$ in Remark \ref{rmk: |S| in (r,s)}.
\end{remark}

%% file: Thm2/thm2.tex
\subsection{Nonemptiness condition of $C_w \cap \cH(X, h)$}
\label{sec: thm 2}

Finally, we rederive Statement (1) of Theorem \ref{thm: Cor 6.3}, the nonemptiness condition of the intersection of a Hessenberg variety $\cH(X,h)$ with a given Schubert cell $C_w$. 
We show that this is equivalent to the existence of a unique minimal Hessenberg function whose associated Hessenberg variety intersects nontrivially with a fixed Schubert cell $C_w$, where minimality is in the sense of partial ordering of Hessenberg functions.

\begin{theorem}[Also introduced as Theorem \ref{thm: Thm 2, intro}]
\label{thm: Thm 2}
Given a Schubert cell  $C_w$ and a nilpotent matrix $X$ in highest form, there exists a unique Hessenberg function $h_{\operatorname{min}}$ such that $C_w \cap \mathcal{H}(X, h)$ is nonempty if and only if $h \geq h_{\operatorname{min}}$, where $h_{\operatorname{min}}$ is minimal in the sense of the partial ordering of Hessenberg functions defined in Definition \ref{prop: poset structure on hessenberg functions}. Concretely, $h_{\operatorname{min}}$ is given by
\begin{align*}
    h_{\min}(k) = \begin{cases}
         \max\{k, h_{\operatorname{min}}(k-1)\} &\text{if  } w(k) \leq \dim(\ker(X)), \\
        \max\{k, h_{\min}(k-1), w^{-1}(\phi(w(k)))\}  &\text{if  } w(k) > \dim(\ker(X)),
    \end{cases}
\end{align*}
\noindent where we use the convention $h_{\operatorname{min}}(0) = 0$.
\end{theorem}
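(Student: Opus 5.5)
The nonemptiness of $C_w\cap\cH(X,h)$ depends only on whether some unit equation $f_{(k,m)}=1$ appears in $S_{MT}$, so the plan is to identify exactly when that happens and then package the resulting conditions as $h\geq h_{\min}$.

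Write the recursion of Theorem \ref{thm: Thm 1} with $p=\phi(w(k))$ and $r=w(m)$. By Lemma \ref{lem: pivot value 1}, if $w(k)\le \dim\ker X$ then $Xv_k=0$, since $v_k$ lies in the span of $e_1,\dots,e_{w(k)}$ and these lie in $\ker X$ for $X$ in strictly highest form. Hence every $f_{(k,m)}$ vanishes. Otherwise $Xv_k$ has pivot row $p$ with pivot value $1$. In the Gaussian elimination, $c^{(q)}=0$ for $q>p$ and $c^{(p)}=1$. The elimination steps for $q<p$ produce polynomials without constant term, by the argument in the proof of Proposition \ref{prop: f(k,m) has a linear term}.

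This yields the following claim: the only equation $f_{(k,\cdot)}$ that is a nonzero constant is $f_{(k,w^{-1}(p))}=1$. The remaining ones either vanish identically or have no constant term, so each of them has a zero in $C_w$.

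Next, determine when $C_w\cap\cH(X,h)$ is nonempty. Using Lemma \ref{lem: Xvk in Vm-1 condition} along a Hessenberg path, $C_w\cap\cH(X,h)$ is cut out by the equations $f_{(k,m)}$ with $m>h(k)$. By the claim, if some $k$ with $w(k)>\dim\ker X$ has $w^{-1}(\phi(w(k)))>h(k)$, the unit $f_{(k,w^{-1}(p))}=1$ lies in $S_{MT}$ and the intersection is empty.

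Conversely, suppose $h(k)\ge w^{-1}(\phi(w(k)))$ for all such $k$. Then every element of $S_{MT}$ is a non-constant polynomial with zero constant term. All of them vanish at the origin of $C_w$, namely the permutation matrix $w$, so the intersection is nonempty. This is the easiest way to argue nonemptiness, and it avoids invoking Lemma \ref{lem: reduction of affineness to total order}.

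It follows that $C_w\cap\cH(X,h)\ne\emptyset$ if and only if $h(k)\ge w^{-1}(\phi(w(k)))$ for every $k$ with $w(k)>\dim\ker X$. This is the same as requiring $w^{-1}Xw\in H(h)$, which matches Theorem \ref{thm: Cor 6.3}(1).

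It remains to show that this set of $h$ is the principal up-set of the stated $h_{\min}$. Any Hessenberg function automatically satisfies $h(k)\ge k$ and $h(k)\ge h(k-1)$. So the smallest Hessenberg function meeting the pointwise lower bounds $b(k)=w^{-1}(\phi(w(k)))$ (set $b(k)=0$ when $w(k)\le\dim\ker X$) is obtained recursively as $\max\{k,h(k-1),b(k)\}$, which is exactly the formula. One checks that this value is at most $n$, so it is a Hessenberg function, and that $h\ge h_{\min}$ holds if and only if $h$ satisfies all the bounds, by induction on $k$. Uniqueness is immediate, since a minimum of a poset is unique.

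The main obstacle is the claim that no element of $S_{MT}$ other than the pivot one has a nonzero constant term. This must be handled carefully for $q$ with $q<p$ but $w^{-1}(q)<w^{-1}(p)$, since in that case $[v^{(p)}]_q=0$ rather than a variable, and the contributions $c^{(q)}[v^{(q)}]_r$ and $[Xv_k]_r$ must be shown to remain free of constants. I would treat this by induction on decreasing $q$, using that every non-pivot entry of the canonical matrix, and of $XV$ by the highest form property, is either $0$ or a Schubert variable.
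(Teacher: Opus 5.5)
Your proposal is correct and follows essentially the paper's argument. Both proofs observe that every $f_{(k,m)}$ other than the unit $f_{(k,w^{-1}(\phi(w(k))))}=1$ has zero constant term. Hence $C_w\cap\mathcal{H}(X,h)$ is nonempty exactly when the permutation-matrix flag $V_0$ lies in it, i.e.\ when $h(k)\ge w^{-1}(\phi(w(k)))$ whenever $w(k)>\dim\ker X$, and $h_{\min}$ is then assembled recursively. Your unit-equation argument for emptiness and your check that the admissible $h$ form exactly the up-set of $h_{\min}$ are somewhat more careful than the paper's version.

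Your ``main obstacle'' is in fact immediate. The constant term of $f_{(k,m)}$ is its value at $V_0$, namely the coefficient of $e_{w(m)}$ in $Xe_{w(k)}$, so no induction on $q$ is needed. Note also that, like the paper, you are really using \emph{strictly} highest form, which is what guarantees that $e_1,\dots,e_{\dim\ker X}$ span $\ker X$.
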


\begin{proof}
    To prove existence and uniqueness of the Hessenberg function with the desired property, we give a constructive algorithm to generate such a minimal Hessenberg function $h_{\min}$. The idea is as follows:  by Proposition \ref{prop: f(k,m) has a linear term}, a nonempty intersection $C_w \cap \cH(X,h)$ is cut out in $C_w$ by equations in Schubert variables with no constant terms. Hence such any nonempty intersection contains the flag $V_0$ in $C_w$, in which all of the Schubert variables take value zero. Note that $V_0$ is the permutation matrix associated to $w$. Hence we may check the behavior of pivots of $V_0$ under the action of the highest form operator $X$.
    
    The $k$-th column $v_k$ of $V_0$ is $e_{w(k)}$. Because $X$ is in highest form, $Xv_k = X e_{w(k)}  = e_{\phi(w(k))}$ if $w(k) > \dim(\ker(X)))$, and $Xv_k =0$ otherwise. Hence for indices $k$ such that $w(k) \leq \dim(\ker(X))$, we must take $h_{\operatorname{min}}(k) = \min\{k, h_{\operatorname{min}}(k-1)\}$, the minimal value at $k$ that a Hessenberg function is allowed to take. If $w(k) > \dim(\ker(X))$, then the subspace $V_l$ contains the image vector $Xv_k =e_{\phi(w(k))}$ if and only if $l \geq w^{-1}(\phi(w(k)))$. Hence in this case we take $h_{\min}(k) = w^{-1}(\phi(w(k)))$.
    Therefore, we obtain a recursive formula for $h_{\min}$ as follows.
    \begin{align*}
        h_{\min}(k) = \begin{cases}
            \max\{k,  h_{\min}(k-1)\} &\text{if  } w(k) \leq \dim(\ker(X)), \\
            \max\{k, h_{\min}(k-1), w^{-1}(\phi(w(k)))\}  &\text{if  } w(k) > \dim(\ker(X)) .
        \end{cases}
    \end{align*}

\noindent By construction of $h_{\min}$, this minimal Hessenberg function is unique. Indeed, for any index $k$, if $h$ is a Hessenberg function such that $h(k) < h_{\min}(k)$, then $Xv_k$ does not lie in $V_{h(k)}$ from the argument above, so $C_w \cap \cH(X,h)$ is empty.  
\end{proof}

In practice, we determine $h_{\min}$ in an ascending order of the indices: we first determine $h_{\min}(1)$, which can then be used to determine $h_{\min}(2)$, and so on.

\begin{ex}
Consider $w=[4213]\in S_4$ and $\lambda=(2,2)$. Then the highest form $X$ has pivot row-indices $\phi(k) = (0,0,1,2)$  as shown in Example \ref{ex: highest form of (2,2)}. Following Theorem \ref{thm: Thm 2}, we compute $h_{\operatorname{min}}$ inductively in $k$ to obtain 
$h_{\text{min}}=\hleft 2234 \hright$. This is consistent with the diagram in Section 4.3 shown in Figure \ref{fig: (2,2), w=[4213]}.
\end{ex}

\begin{remark}
    We contrast this result with the original nonempty condition in Theorem \ref{thm: Cor 6.3} given by Tymoczko, which is stated as $w^{-1}Xw \in H(h)$. Observe that the operator $w^{-1}Xw$ sends the basis vectors $e_k$ to $e_{w^{-1}(\phi(w(k)))}$ if $w(k) > \dim(\ker(X))$  and 0 otherwise. The matrix $w^{-1}Xw$ to lie in $H(h)$ if and only if for each $k$, the image of $e_k$ lies in the span of $e_1,\cdots, e_{h(k)}$. This condition precisely amounts to the condition $h \geq h_{\operatorname{min}}$.
\end{remark}

%% file: refs.bib
@article{billey2025,
  title={Introduction to the Cohomology of the Flag Variety},
  author={Sara C. Billey and Yibo Gao and Brendan Pawlowski},
  journal={arXiv:2506.21064},
  year={2025},
  eprint={2506.21064},
  primaryclass={math.CO},
  archivePrefix={arXiv},
  url={https://arxiv.org/abs/2506.21064}
}

@book{Fulton1997,
  author    = {William Fulton},
  title     = {Young Tableaux: With Applications to Representation Theory and Geometry},
  series    = {London Mathematical Society Student Texts},
  volume    = {35},
  publisher = {Cambridge University Press},
  address   = {Cambridge},
  year      = {1997}
}

@Article{Tymoczko2006,
  author    = {Julianna S. Tymoczko},
  journal   = {American Journal of Mathematics},
  title     = {Linear conditions imposed on flag varieties},
  year      = {2006},
  issn      = {1080-6377},
  number    = {6},
  pages     = {1587-1604},
  volume    = {128},
  doi       = {10.1353/ajm.2006.0050},
  publisher = {Project MUSE},
}

@incollection{AbeHoriguchi2019,
  author    = {Abe, Hiraku and Horiguchi, Tatsuya},
  title     = {A Survey of Recent Developments on Hessenberg Varieties},
  booktitle = {Schubert Calculus and Its Applications in Combinatorics and Representation Theory},
  editor    = {Hu, Jianxun and Mihalcea, Leonardo C. and Li, Changzheng},
  series    = {Springer Proceedings in Mathematics \& Statistics},
  volume    = {332},
  pages     = {251--279},
  year      = {2020},
  publisher = {Springer Singapore},
  doi       = {10.1007/978-981-15-7451-1_10}
}

@article{DeMariProcesiShayman1992,
  author    = {F. De Mari and C. Procesi and M. A. Shayman},
  title     = {Hessenberg varieties},
  journal   = {Transactions of the American Mathematical Society},
  volume    = {332},
  number    = {2},
  year      = {1992},
  pages     = {529--534},
  doi       = {10.2307/2154242}
}

@Article{Precup2013,
  author    = {Martha Precup},
  journal   = {Selecta Mathematica},
  title     = {Affine pavings of Hessenberg varieties for semisimple groups},
  year      = {2013},
  issn      = {1022-1824},
  number    = {4},
  pages     = {903-922},
  volume    = {19},
  doi       = {10.1007/s00029-012-0109-z},
  publisher = {Springer Science and Business Media LLC},
}

@article{dewitt2012poset,
  title={Poset pinball, highest forms, and $(n-2,2)$ Springer varieties},
  author={Dewitt, Barry and Harada, Megumi},
  journal={The Electronic Journal of Combinatorics},
  volume={19},
  number={1},
  pages={P56},
  year={2012},
  eprint={1012.5265},
  archivePrefix={arXiv},
  primaryClass={math.AG}
}

@article{deConciniProcesi,
  author  = {De Concini, C. and Procesi, C.},
  title   = {Complete symmetric varieties},
  journal = {Inventiones Mathematicae},
  volume  = {81},
  number  = {2},
  pages   = {275--302},
  year    = {1985},
  doi     = {10.1007/BF01389054}
}
